\numberwithin{equation}{section}
\numberwithin{equation}{subsection}
\theoremstyle{plain}
\newtheorem{theorem}[equation]{Theorem}
\newtheorem{lemma}[equation]{Lemma}
\newtheorem{proposition}[equation]{Proposition}
\newtheorem{corollary}[equation]{Corollary}
\theoremstyle{definition}
\newtheorem{example}[equation]{Example}
\newtheorem{remark}[equation]{Remark}
\newtheorem{definition}[equation]{Definition}
\newcommand{\bC}{{\mathbb C}}
\newcommand{\bQ}{{\mathbb Q}}
\newcommand{\bL}{{\mathbb L}}
\newcommand{\bP}{{\mathbb P}}
\newcommand{\cV}{{\mathcal V}}
\newcommand{\cA}{{\mathcal A}}
\newcommand{\cE}{{\mathcal E}}
\newcommand{\cS}{{\mathcal S}}\newcommand{\calS}{{\mathcal S}}
\newcommand{\cP}{{\mathcal P}}
\newcommand{\cO}{{\mathcal O}}\newcommand{\calO}{{\mathcal O}}
\newcommand{\cF}{{\mathcal F}}
\newcommand{\cH}{{\mathcal H}}
\newcommand{\cL}{{\mathcal L}}
\newcommand{\cZ}{\mathcal{Z}}
\newcommand{\tX}{\widetilde{X}}
\newcommand{\C}{{\calc}}
\def\blfootnote{\xdef\@thefnmark{}\@footnotetext}
\newcommand{\bt}{{\bf t}}
\newcommand{\ba}{{\bf a}}
\newcommand{\bx}{{\bf x}}
\newcommand{\setQ}{\mathbb{Q}}
\newcommand{\setZ}{\mathbb{Z}}
\newcommand{\bI}{{J(l',I)}}
\newcommand{\hh}{\mathfrak{h}}
\newcommand{\pp}{\mathfrak{p}}
\newcommand{\calv}{\mathcal{V}}
\newcommand{\Z}{\mathbb{Z}}
\newcommand{\Q}{\mathbb{Q}}
\gdef\SetFigFontNFSS#1#2#3#4#5{%
  \reset@font\fontsize{#1}{#2pt}%
  \fontfamily{#3}\fontseries{#4}\fontshape{#5}%
  \selectfont}%
\newcommand{\calo}{{\mathcal O}}
\def\C{\mathbb C}
\def\Q{\mathbb Q}
\def\bL{\mathbb L}
\def\Z{\mathbb Z}
\newcommand{\cale}{{\mathcal E}}
\author{J\'anos Nagy}
\address{Central European University, Dept. of Mathematics,  Budapest, Hungary}
\email{nagy\textunderscore janos@phd.ceu.edu}
\author{Andr\'as N\'emethi}
\thanks{The author is partially supported by NKFIH Grant  112735}
\address{Alfr\'ed R\'enyi Institute of Mathematics,
Hungarian Academy of Sciences,
Re\'altanoda utca 13-15, H-1053, Budapest, Hungary \newline
 \hspace*{4mm} ELTE - University of Budapest, Dept. of Geometry, Budapest, Hungary \newline \hspace*{4mm}
BCAM - Basque Center for Applied Math.,
Mazarredo, 14 E48009 Bilbao, Basque Country – Spain}
\email{nemethi.andras@renyi.mta.hu }
\title{Motivic Poincar\'e series of cusp surface singularities}
\begin{document}

\keywords{normal surface singularities, links of singularities,
plumbing graphs, rational homology spheres, cusp singularities, divisorial filtration,
Poincar\'e series, zeta functions, motivic Poincar\'e series}

\subjclass[2010]{Primary. 32S05, 32S25, 32S50, 57M27
Secondary. 14Bxx, 14J80}

\begin{abstract}
We target multivariable series associated with resolutions of complex analytic
normal surface singularities.
In general,  the equivariant multivariable analytical and topological Poincar\'e  series are well--defined and have good properties only if the link is a rational homology sphere. We wish to create a model when this assumption is not valid: we analyse the case of cusps. For such germs we define even the motivic versions of these two series, we prove that they are equal, and we provide explicit combinatorial expression for them. This is done via a motivic multivariable series associated with
the space of effective Cartier divisors of the reduced exceptional curve.
\end{abstract}

\maketitle

\linespread{1.2}



\pagestyle{myheadings} \markboth{{\normalsize A. N\'emethi}} {{\normalsize
Poincar\'e series of cusp singularities }}


\section{Introduction}

\subsection{}
Let us consider a complex analytic normal surface singularity and fix one of its
resolutions and the divisorial filtration of the local algebra associated with the
irreducible exceptional divisors.
The multivariable Poincar\'e series $\cP_0(\bt)$ associated with this
filtration  is one of the strongest analytic invariants of the germ.
For definition, particular examples  and several properties see e.g.
\cite{CHR,CDG,CDGEq,Five,Gradedroots,CDGb,NJEMS,LineBundles}.

In several special cases (even if the germ is not taut, but the analytic structure
is `nice'), $\cP_0(\bt)$ can be recovered  from the topology of the link. These cases include e.g.
the rational or minimally elliptic singularities (with rational homology sphere link)
\cite{CDGb}. The most general case when such a characterization was established is the family
of splice quotient singularities  \cite{LineBundles}.
(For such germs the analytic type is defined canonically from the graph
\cite{NWnew,NWuj2}.)

However, in all the cases when such a characterization was established the link of the corresponding germs are rational homology spheres, $\bQ HS^3$,  (that is, the dual graph is a tree of
${\mathbb P}^1$'s). In this note we wish to step over this obstruction by analysing the case
of cusps, when the dual graph consists of a loop. (The rationality of the irreducible
exceptional divisors, due to the complexity of the moduli space of algebraic curves,
presumably cannot be dropped in such topological comparison without some other essential
analytic assumption.)

 In fact, there exists even a more general series, the equivariant multivariable
Poincar\'e series $\cP(\bt)$
associated with the divisorial filtration of the local algebra of
the universal abelian  covering of the germ.
It behaves in a more uniform and conceptual way in several geometric construction, e.g.
in the context of abelian coverings and associated bundles.
However, its definition is also obstructed:
it is defined in terms of the universal abelian covering, which is well--defined only when the link is a rational homology sphere. Again,
we will step over this obstruction as well in the case of cusps.

For definition and certain properties of cusp surface singularities see \cite{Laufer77,Five,Ninv,Wagreich}.

\bekezdes When the link is a rational homology sphere there  is a concrete `topological
candidate' for $\cP(\bt)$, denoted by $\cZ(\bt)$, a multivariable series defined from
the combinatorics of the resolution graph (well--defined in any case,
even if $\cP(\bt)\not=\cZ(\bt)$). In the cases mentioned above (rational, splice quotient) in fact one has $\cP(\bt)=\cZ(\bt)$. This series $\cZ(\bt)$ has several other pleasant properties, it has deep connections with several topological invariants of the link (e.g. with the Seiberg--Witten invariant, lattice cohomology, etc) \cite{NJEMS,NICM,NLinSp}. In this way it created a bridge between analytic and topological invariants.

However, usually (e.g. for a non-rational or non--elliptic topological types)
 for a `non--nice' analytic type (e.g., for a generic one)  $\cP(\bt)\not=\cZ(\bt)$).
In such cases the challenge is to find new topological candidate series, which
might recover $\cP(\bt)$ for several `bad'  (but interesting)  families of  analytic types.
Hence different geometric  realizability (equivalent definitions)
even of `classical' $\cZ(\bt)$ is crucial: they might
suggets/impose different generalizations/versions, which might fit with different analytic structures. (This partly motivated that in the body of the article we list several parallel realizabilities.) 

\subsection{} This article has several goals. Firstly, we wish to provide a model
for the definition of $\cP(\bt)$
(in terms of analytic data)
when the link is not a rational homology sphere (hence the universal abelian
covering does not exists).
We will do this for the cusp normal surface singularities.
(Maybe here is appropriate to eliminate a possible confusion from the start:
Though cusp singularities are taut \cite{Laufer73},
hence there is no analytic moduli of their analytic structures,
the construction of $\cP(\bt)$ involves --- must involve --- certain line bundles associated with fixed Chern classes; since the corresponding Picard groups are non--trivial, these choices have an analytic moduli.)

Second, we wish to provide
 a good topological candidate $\cZ(\bt)$ for $\cP(\bt)$.
(Note that the `old' definition of $\cZ(\bt)$, though well--defined,
 for cusps gives the meaningless constant series   1, which definitely should be
 modified.)

 In order to provide these two definitions
  we rely on two collections of linear subspace arrangements,
 one of them defined analytically, the other one topologically.
 Both are indexed by the possible first Chern classes of the resolution.
 Both series in the new situation are defined via these arrangements
 by considering the topological Euler characteristic of the projectivised arrangement complements.
  For details regarding these arrangements  see  \cite{NICM,NLinSp}.
  (Since the cusp singularities are minimally elliptic, in the introductory part
  we emphasize more the known facts regarding
  the rational and minimally elliptic cases, the second one under the $\bQ HS^3$--link assumption. This serves as a good comparison for  the newly established methods and formulae.)

 Once
the definitions are settled, we prove that in the case of cusps one has $\cP(\bt)=\cZ(\bt)$ indeed. Hence, $\cZ(\bt)$ is a good topological candidate for
$\cP(\bt)$ even if the graph has 1--cycles.

In fact, the new definition  (based on the complements of subspace arrangements)
 allows us to extend both series to their
motivic versions (with coefficient in the Grothendieck group), denoted by
$\cP(\bL,\bt)$ and $\cZ(\bL,\bt)$. For these extended versions we also establish the
identity $\cP(\bL,\bt)=\cZ(\bL,\bt)$.

Surprisingly, this new extensions direct us to an unexpected new territory.
It turns out that these objects can be related with a multivariable motivic series associated with the effective Cartier divisors (supported by the reduced
exceptional curve and indexed by the possible Chern classes), denoted by
$\cZ^{{\rm ECa}}(\bL,\bt)$.
(Maybe is worth to mention that for minimal resolution of cusps the reduced exceptional divisor equals the
Artin fundamental cycle, the minial elliptic cyle  and the anticanonical cycle as well
\cite{Laufer77}.)
This connection was suggested and imposed by the
recent manuscripts of the authors regarding the Abel maps associated with a resolution of a normal surface singularity \cite{NNI,NNII,NNIII,NNIIIb}. The point is that the series
$\cZ^{{\rm ECa}}(\bL,\bt)$ has a very natural form in terms of the graph (for notations see  \ref{ss:DEFNOT}, $\cV$ and $\cE$ are the set of vertices and edges, while $E_v^*$ stay for
 the dual base elements):
$$\cZ^{{\rm ECa}}(\bL,\bt)=\frac{
\prod_{(u,v)\in\cE}\ (1-\bt^{E_u^*}-\bt^{E_v^*}+\bL \bt^{E_u^*+E_v^*})}
{\prod_{v\in\cV} \ (1-\bt^{E^*_v})(1-\bL\bt^{E^*_v})}.$$
(This is valid for any singularity --- not necessarily for a cusp ---, whenever all the
irreducible exceptional curves are rational.)
Then, in the case of cusp,  the following phenomenon happens. For relevant Chern classes $l'$, the motivic information of
any fiber of the Abel map ${\rm ECa}^{-l'}(E)\to {\rm Pic}^{-l'}(E)=\bC^*$ can be related with $\cP(\bt)$. It turns out that
 for a cusp and its minimal resolution
$$\cZ(\bL,\bt)=\cP(\bL,\bt)= 1+\frac{\cZ^{{\rm ECa}}(\bL,\bt)-1}{\bL-1}.$$
Furthermore (when the exceptional curve has at least two components) then
\begin{equation*}\begin{split}
\cP(\bt)=\cZ(\bt)&=\Big(1+\frac{\cZ^{{\rm ECa}}(\bL,\bt)-1}{\bL-1}\Big)\Big|_{\bL=1}\\
&=1+\sum_{v\in\calv} \frac{\bt^{E^*_v}}{1-\bt^{E^*_v}}+\sum _{(u,v)\in\cale}
\frac{\bt^{E^*_v}}{1-\bt^{E^*_v}}\cdot \frac{\bt^{E^*_u}}{1-\bt^{E^*_u}}.
\end{split}\end{equation*}
For the remaining case see Theorem \ref{bek:formulacusp}.

\section{Preliminaries regarding normal surface singularities}\label{s:prel}

\subsection{Definitions, notations} \label{ss:DEFNOT}
Let \((X,o)\) be a complex normal surface singularity.
 Let \(\pi:\widetilde{X}\to X\) be
a good resolution with dual graph  \(\Gamma\) whose vertices are
denoted by $\cV$ and edges by $\cE$.
Set $E:=\pi^{-1}(o)$. Let $M$ be the link of $(X,o)$.

Set \(L := H_2 ( \widetilde{X},\setZ )\). It is freely
generated by the classes of the irreducible exceptional curves.
If  \(L'\) denotes
\(H^2( \widetilde{X}, \setZ )\), then the intersection form
\((\,,\,)\) on \(L\) provides an embedding \(L \hookrightarrow
L'\) with factor the torsion part of the first homology group \(H\) of the link.
(In fact, $L'$ is the dual lattice of $(L,(\,,\,))$, it can also be
identified with $H_2(\widetilde{X}, \partial\widetilde{X},\Z)$.)
Moreover,  $(\,,\,)$ extends to
$L'$. $L'$ is freely generated by the duals \(E_v^*\), where
$ ( E_v^*, E_w) =  -1 $ for $v = w$ and
$=0$ else.

Effective classes \(l=\sum r_vE_v\in L'\) with all
\(r_v\in\setQ_{\geq 0}\) are denoted by \(L'_{\geq 0}\) and
$L_{\geq 0}:=L'_{\geq 0}\cap L$. Denote by $\cS'$ the (Lipman's)
anti-nef cone $\{l'\in L'\,:\, (l',E_v)\leq 0 \ \mbox{for all
$v$}\}$.  It is generated over $\setZ_{\geq 0}$ by the
base-elements $E_v^*$. Since   all the entries  of $E_v^*$
are strict positive, $\cS'$ is a sub-cone of $L_{\geq 0}'$,
and for any fixed $a\in L'$ the set
$\{l'\in \cS'\,:\, l'\ngeq a\}$ is finite.
Set \({\mathfrak C}:=\{\sum l'_vE_v\in L', \ 0\leq l_v' < 1\}\).
For any $l'\in L'$ write its class in $H$ by $[l']$.
Denote by $\theta:H\to \widehat{H}$ the isomorphism $[l']\mapsto
e^{2\pi i(l',\cdot)}$ of $H$ with its Pontrjagin dual
$\widehat{H}$.

Let  $K\in L'$ be the {\em canonical
class} satisfying $(K+E_v,E_v)=-2+2g_v$ for all $v\in\cV$, where $g_v$ is the genus of $E_v$. Set $\chi(l')=
-(l',l'+K)/2$. By Riemann-Roch theorem $\chi(l)=\chi(\calo_l)$ for $l\in L_{>0}$.

{\it In this preliminary section
 we will assume that $M$ is a rational homology sphere}.
(This is exactly that assumption what we wish to drop later.)
This happens if and only if \(\Gamma\) is a tree and $g_v=0$ for all $v$.

In subsections \ref{FM} and \ref{SM} we list certain analytic invariants,
then we continue with  the topological ones.
For more details regarding this part see e.g.
\cite{CHR,CDG,CDGEq,Five,Gradedroots,CDGb,NJEMS,LineBundles}.
These results are present in the literature (though slightly scattered),
 nevertheless here we collect the relevant ones.  The reason is  that this list of
constructions and results will serve as prototypes for further generalizations,
in some cases they already suggest the need of modifications as well.

\subsection{Natural line bundles}\label{FM}
Natural  line bundles are provided by the splitting of the
cohomological exponential exact sequence
 \cite[\S 4.2]{Gradedroots}:
 $$ 0 \to H^1 ( \widetilde{X}, \mathcal{O}_{\widetilde{X}} )\to
  {\rm Pic}(\widetilde{X}) \stackrel{c_1}{\longrightarrow} L' \to 0.$$
The first Chern class \(c_1\)  has an obvious
section on the subgroup \(L\), namely $l\mapsto \calo_{\widetilde{X}}(l) $.  This section has a unique extension
\(\mathcal{O}(\cdot )\) to \(L'\). We call a line bundle \emph{natural} if
it is in the image of this section.
By this definition, a line bundle $\cL$ is natural if and only if there exists a positive integer $n$ such that $\cL^{\otimes n}$ has the form $\calo_{\widetilde{X}}(l)$ for some $l\in L$.

One can recover the natural line  bundles via coverings as follows.
Let \(c:(X_a,o)\to (X,o)\) be the universal abelian covering of
\((X,o)\). (Note that the existence of the universal abelian covering is guaranteed
by the fact that the link is a rational homology sphere.
Indeed $c$ is a finite regular covering over $X\setminus \{o\}$
and this regular covering has a unique
non-regular  extension to the level of germs of normal surface singularities. The regular covering
is associated with the representation $\pi_1(M)\to H_1(M,\Z)= {\rm Tors}(H_1(M,\Z))=H$.) Furthermore, let
  \(\pi_a:\widetilde{X}_a\to X_a\) the normalized pullback of
\(\pi\) by \(c\), and
\(\widetilde{c}:\widetilde{X}_a\to\widetilde{X}\) the morphism which
covers \(c\). Then the action of \(H\) on \((X_a,o)\) lifts to an action on
\(\widetilde{X}_a\) and one has an $H$--eigensheaf  decomposition
(\cite[4.2.9]{Gradedroots} or \cite[(3.5)]{Opg}):
\begin{equation}\label{eq:01}
\widetilde{c}_*\cO_{\widetilde{X}_a}=\bigoplus _{l'\in {\mathfrak C}}
\cO_{\widetilde{X}}(-l') \ \ \ \
 (\mbox{$\cO_{\widetilde{X}}(-l')$ being the $\theta([l'])$-eigenspace of $\widetilde{c}_*\cO_{\widetilde{X}_a}$}).
\end{equation}
Then write  any $l'\in L'$ as $l'_0+l$ with $l'_0\in \mathfrak{C}$ and $l\in L$, and  set
$\calo_{\widetilde{X}}(-l'):=\calo_{\widetilde{X}}(-l'_0)\otimes
\calo_{\widetilde{X}}(-l)$.

\subsection{Series associated with the divisorial filtration.}\label{SM}
Once a resolution
$\pi$ is fixed, $\cO_{X_a,o}$ inherits the {\em divisorial
multi-filtration} (cf. \cite[(4.1.1)]{CDGb}):
\begin{equation}\label{eq:03}
\cF(l'):=\{ f\in \cO_{X_a,o}\,|\, {\rm div}(f\circ\pi_a)\geq \widetilde{c}^*(l')\}.
\end{equation}
Let $\hh(l') $ be the dimension of the $\theta([l'])$-eigenspace
of $\cO_{X_a,o}/\cF(l')$. Then, one defines  the {\em equivariant
divisorial Hilbert series} by
\begin{equation}\label{eq:04}
\cH(\bt)=\sum _{l'\in L'}
\hh(l')t_1^{l_1'}\cdots t_s^{l_s'}=\sum_{l'\in L'}\hh(l')\bt^{l'}\in
\setZ[[L']] \ \ (l'=\sum _i l'_iE_i).
\end{equation}

Notice that the terms of the sum reflect the $H$-eigenspace
decomposition too: $\hh(l')\bt^{l'}$ contributes to
the $\theta([l'])$-eigenspace.  For example, $\sum_{l\in L}\hh(l)\bt^{l}$ corresponds
to the $H$-invariants, hence it is the {\em Hilbert series}  of
$\cO_{X,o}$ associated with the $\pi^{-1}(o)$-divisorial
multi-filtration.

The `graded version' associated with the
Hilbert series is defined (cf. \cite{CDG,CDGEq}) as
\begin{equation}\label{eq:06}
\cP(\bt)=-\cH(\bt) \cdot \prod_v(1-t_v^{-1})\in \setZ[[L']].
\end{equation}
Usually this is called the equivarient multivariable analytic Poincar\'e series of $(X,o)$.

If we write the series $\cP(\bt)$ as $\sum_{l'} \pp(l')\bt^{l'}$, then
\begin{equation} \label{eq:1}
\pp(l')=\sum_{I \subseteq \cV}
(-1)^{\left\lvert I \right\rvert + 1}
  \dim \frac{H^0(\widetilde{X}, \calo_{\widetilde{X}}(-l'))}
   {H^0(\widetilde{X}, \calo_{\widetilde{X}}(-l'-E_I  ))}
\end{equation}
and $\cP$ is supported in the cone $\calS'$.

 Although the
multiplication by $\prod_v(1-t_v^{-1})$ in $\setZ[[L']]$ is not
injective, hence apparently $\cP$ contains less information then
$\cH$, they, in fact, determine each other. Indeed,  for any $l'\in L'$ one has
\begin{equation}\label{eq:10b}
\hh(l')=\sum_{l\in L,\, l\not\geq 0} \pp(l'+l).
\end{equation}

\subsection{Linear subspace arrangements associated with the filtration}\cite{NLinSp,NICM,NBOOK}
\label{bek:PasArr}
Fix a normal surface singularity, one of its
resolutions and the filtration $\{\cF(l')\}_{l'\in L'}$.
For any $l'\in L'$,  the linear space
 $$(\cF(l')/\cF(l'+E))_{\theta([l'])}=H^0(\cO_{\tX}(-l'))/H^0(\cO_{\tX}(-l'-E))$$
naturally embeds  into
$$T(l'):=H^0(\cO_E(-l')).$$
Let its image be denoted by $A(l')$.
Furthermore,
for every $v\in \cV$, consider the linear subspace $T_v(l')$ of $T(l')$
given by
$$T_v(l'):=H^0(\cO_{E-E_v}(-l'-E_v))=\mathrm{ker}\,(\, H^0(\cO_E(-l'))\to H^0(\cO_{E_v}(-l'))\,)\subset
T(l').$$
Then the image $A_v(l')$ of $ H^0(\cO_{\tX}(-l'-E_v)/H^0(\cO_{\tX}(-l'-E))$ in $T(l')$
satisfies $A_v(l')=A(l')\cap T_v(l')$.
\begin{definition}\label{def:arr}
The (finite dimensional) arrangement of linear subspaces $\cA_{{\rm top}}(l')
=\{T_v(l')\}_v$ in $T(l')$
is called the `topological arrangement' at $l'\in L'$.
The arrangement of linear subspaces $\cA_{{\rm an}}(l')=
\{A_v(l')=T_v(l')\cap A(l')\}_v$ in $A(l')$
is called the `analytic arrangement' at $l'\in L'$.
The corresponding projectivized arrangement complements will be denoted by
$\bP (T(l')\setminus \cup_v T_v(l'))$ and $\bP (A(l')\setminus \cup_v A_v(l'))$ respectively.
\end{definition}
If $l'\not\in \cS'$ then there exists $v$ such that $(E_v,l')>0$, that is $h^0(\cO_{E_v}(-l'))=0$,
proving  that $T_v(l')=T(l')$. Hence $A_v(l')=A(l')$ too. In particular, both arrangement complements
are empty.

The inclusion--exclusion principle and $\dim V_I=\chi_{{\rm top}}(\bP V_I)$ shows the following fact.

\begin{lemma}\label{lem:Arrang}
Assume that $\{V_\alpha\}_{\alpha\in \Lambda}$
is a finite family of linear subspaces of a finite dimensional
linear space $V$. For $I\subset \Lambda $ set $V_I:=\cap_{\alpha\in I}V_\alpha$
(where $V_\emptyset=V$). Then
$$\chi_{{\rm top}} ( \bP (V\setminus\cup _\alpha V_\alpha))=\sum_{I\subset \Lambda} \, (-1)^{|I|}
\dim V_I.$$
If $\Lambda\not=\emptyset$, then this also equals
$\sum_I(-1)^{|I|+1}\mathrm{codim}(V_I\subset V)$.
\end{lemma}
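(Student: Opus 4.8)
The plan is to prove the first identity by a direct inclusion--exclusion computation over the subsets $I\subset\Lambda$, and then to deduce the second (codimension) formula as a formal algebraic consequence. First I would recall that for a nonempty finite-dimensional complex linear space $W$ one has $\chi_{\mathrm{top}}(\bP W)=\dim W$, since $\bP W\cong \bP^{\dim W -1}$ and the topological Euler characteristic of $\bP^{d-1}$ is $d$ (with the convention that $\bP$ of the zero space is empty, contributing $0$). The key structural input is that the projectivized complement decomposes additively for the Euler characteristic.

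The main step is to stratify $\bP(V\setminus\bigcup_\alpha V_\alpha)$ and use the additivity of $\chi_{\mathrm{top}}$ with respect to a decomposition into locally closed pieces. Concretely, I would start from the tautological identity of constructible functions (or equivalently of Euler characteristics under additivity)
\begin{equation*}
\mathbf{1}_{\bP(V\setminus \cup_\alpha V_\alpha)} = \mathbf{1}_{\bP V} - \mathbf{1}_{\bP(\cup_\alpha V_\alpha)},
\end{equation*}
and then apply the inclusion--exclusion formula for the Euler characteristic of a finite union. Since $\chi_{\mathrm{top}}$ is additive on locally closed decompositions of complex algebraic varieties and multiplicative in the appropriate sense, for any finite family the union satisfies
\begin{equation*}
\chi_{\mathrm{top}}\Big(\bP\big(\cup_\alpha V_\alpha\big)\Big)=\sum_{\emptyset\neq I\subset\Lambda}(-1)^{|I|+1}\chi_{\mathrm{top}}\big(\bP V_I\big),
\end{equation*}
using that $\bP V_\alpha\cap \bP V_\beta=\bP(V_\alpha\cap V_\beta)$ and more generally that the intersection of the $\bP V_\alpha$ over $\alpha\in I$ is exactly $\bP V_I$. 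Substituting $\chi_{\mathrm{top}}(\bP W)=\dim W$ everywhere (the $I=\emptyset$ term being $\dim V$) collapses the two displays into the single alternating sum $\sum_{I\subset\Lambda}(-1)^{|I|}\dim V_I$, which is the claimed formula.

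For the second formula I would argue purely algebraically, assuming $\Lambda\neq\emptyset$. Writing $\dim V_I=\dim V-\mathrm{codim}(V_I\subset V)$ and splitting the sum gives
\begin{equation*}
\sum_{I\subset\Lambda}(-1)^{|I|}\dim V_I = (\dim V)\sum_{I\subset\Lambda}(-1)^{|I|} - \sum_{I\subset\Lambda}(-1)^{|I|}\,\mathrm{codim}(V_I\subset V).
\end{equation*}
Here $\sum_{I\subset\Lambda}(-1)^{|I|}=(1-1)^{|\Lambda|}=0$ because $\Lambda$ is nonempty, so the first term vanishes. In the remaining sum the $I=\emptyset$ term has $\mathrm{codim}(V\subset V)=0$ and contributes nothing, so the sum is over nonempty $I$, and one obtains $\sum_{I\subset\Lambda}(-1)^{|I|+1}\mathrm{codim}(V_I\subset V)$ after pulling out the overall sign. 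This is exactly the asserted alternative expression.

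The step I expect to be the genuine content rather than bookkeeping is the additivity/inclusion--exclusion property of $\chi_{\mathrm{top}}$ for the projectivized union, i.e. justifying that $\chi_{\mathrm{top}}(\bP(\cup_\alpha V_\alpha))$ obeys the inclusion--exclusion recursion. This rests on the motivic/additive nature of the topological Euler characteristic on complex algebraic varieties (additivity over a stratification by locally closed subvarieties), together with the elementary lattice fact $\bP V_\alpha\cap\bP V_\beta=\bP(V_\alpha\cap V_\beta)$ for linear subspaces; both are standard, so the obstacle is conceptual clarity rather than technical difficulty. The rest, including the vanishing $(1-1)^{|\Lambda|}=0$ and the substitution $\chi_{\mathrm{top}}(\bP W)=\dim W$, is routine.
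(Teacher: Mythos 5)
Your proposal is correct and takes essentially the same route as the paper: the paper deduces the lemma in a single sentence from the inclusion--exclusion principle together with the identity $\chi_{{\rm top}}(\bP V_I)=\dim V_I$, which is precisely the argument you spell out. Your additional details (additivity of $\chi_{{\rm top}}$ on complex algebraic strata, the compatibility $\bP V_\alpha\cap \bP V_\beta=\bP(V_\alpha\cap V_\beta)$, and the cancellation $(1-1)^{|\Lambda|}=0$ for the codimension form) are exactly the bookkeeping the paper leaves implicit.
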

In particular, this lemma and identity (\ref{eq:1}) imply the following.
\begin{corollary}\label{cor:PArr} For any $l'\in \cS'$ one has
$$\pp(l')=\chi_{{\rm top}}(\, \bP(A(l')\setminus \cup_{v}A_v(l'))\,).$$
\end{corollary}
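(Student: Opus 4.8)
The statement follows by applying Lemma \ref{lem:Arrang} to the analytic arrangement $\cA_{{\rm an}}(l')=\{A_v(l')\}_{v\in\cV}$ inside the ambient space $V:=A(l')$, and then comparing with formula (\ref{eq:1}) for $\pp(l')$. The plan is to match, term by term, the codimensions $\mathrm{codim}(A_I(l')\subset A(l'))$ produced by Lemma \ref{lem:Arrang} with the quotient dimensions $\dim H^0(\cO_{\tX}(-l'))/H^0(\cO_{\tX}(-l'-E_I))$ appearing in (\ref{eq:1}). Throughout, for $I\subseteq\cV$ I write $A_I(l'):=\bigcap_{v\in I}A_v(l')$ (with $A_\emptyset(l')=A(l')$) and $E_I:=\sum_{v\in I}E_v$.

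First I would record that the restriction map $\rho\colon H^0(\cO_{\tX}(-l'))\to T(l')=H^0(\cO_E(-l'))$ is, by the exact sequence $0\to\cO_{\tX}(-l'-E)\to\cO_{\tX}(-l')\to\cO_E(-l')\to 0$, surjective onto $A(l')$ with kernel $H^0(\cO_{\tX}(-l'-E))$. Since $A_v(l')=A(l')\cap T_v(l')$, we get $A_I(l')=A(l')\cap\bigcap_{v\in I}T_v(l')$. The key identification is
$$\bigcap_{v\in I}T_v(l')=\ker\big(H^0(\cO_E(-l'))\to H^0(\cO_{E_I}(-l'))\big)=H^0(\cO_{E-E_I}(-l'-E_I)).$$
Here the second equality is the $H^0$ of the ideal-sheaf sequence $0\to\cO_{E-E_I}(-E_I)\to\cO_E\to\cO_{E_I}\to 0$ twisted by $\cO_{\tX}(-l')$, while the first uses that $E_I$ is reduced: a section of $\cO_E(-l')$ vanishes on $E_I$ exactly when it vanishes on each component $E_v$, $v\in I$, i.e.\ the sheaf map $\cO_{E_I}\to\prod_{v\in I}\cO_{E_v}$ is injective.

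Next I would compute the preimage $\rho^{-1}(A_I(l'))$. Since $E_I\subset E$, the restriction $r_I\colon H^0(\cO_{\tX}(-l'))\to H^0(\cO_{E_I}(-l'))$ factors through $\rho$; as $\rho$ surjects onto $A(l')$ we obtain $\rho^{-1}(A_I(l'))=\rho^{-1}(\bigcap_v T_v(l'))=\ker r_I=H^0(\cO_{\tX}(-l'-E_I))$, the last equality coming from $0\to\cO_{\tX}(-l'-E_I)\to\cO_{\tX}(-l')\to\cO_{E_I}(-l')\to 0$. Because $\ker\rho=H^0(\cO_{\tX}(-l'-E))$ is contained in $\rho^{-1}(A_I(l'))$ and $\rho$ is surjective onto $A(l')$, the map $\rho$ induces an isomorphism between $A(l')/A_I(l')$ and $H^0(\cO_{\tX}(-l'))/H^0(\cO_{\tX}(-l'-E_I))$; hence
$$\mathrm{codim}\big(A_I(l')\subset A(l')\big)=\dim\frac{H^0(\cO_{\tX}(-l'))}{H^0(\cO_{\tX}(-l'-E_I))}.$$

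Finally, since $\cV\neq\emptyset$, Lemma \ref{lem:Arrang} gives $\chi_{{\rm top}}(\bP(A(l')\setminus\cup_v A_v(l')))=\sum_{I}(-1)^{|I|+1}\mathrm{codim}(A_I(l')\subset A(l'))$, and substituting the codimension formula yields exactly the right-hand side of (\ref{eq:1}), namely $\pp(l')$. The only genuinely delicate point is the chain of identifications in the middle step: that intersecting the single-vertex subspaces $T_v(l')$ corresponds to passing to the reduced curve $E_I$ and twisting down by $E_I$, and that this commutes correctly with the preimage under $\rho$. Everything else is bookkeeping with the restriction map $\rho$ and the elementary linear algebra of codimensions.
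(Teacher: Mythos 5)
Your proof is correct and is essentially the paper's own argument: the paper obtains the corollary precisely by combining Lemma \ref{lem:Arrang} (in its codimension form) with identity (\ref{eq:1}), leaving implicit exactly the identification $\mathrm{codim}\,(\cap_{v\in I}A_v(l')\subset A(l'))=\dim H^0(\cO_{\tX}(-l'))/H^0(\cO_{\tX}(-l'-E_I))$ that you verify via the restriction map $\rho$ (the paper records the same dimension counts immediately afterwards, in (\ref{lem:arr})). Your careful middle step --- that intersecting the subspaces $T_v(l')$ amounts to restricting to the reduced curve $E_I$ and that this is compatible with taking preimages under $\rho$ --- is a correct filling-in of the detail the paper suppresses, not a different route.
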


Then for any $l'\in L'$ and $I\subset \calv$ one has:
\begin{equation}\label{lem:arr}
\dim\,A(l')=\hh (l'+E)-\hh(l'), \ \ \ \dim\, \cap_{v\in I}A_v(l')=\hh(l'+E)-\hh(l'+E_I).
\end{equation}
Thus, we can expect that the analytic arrangement is rather sensitive to the
modification of the analytic structure, and in general, does not coincide with the
topological arrangement.

In the next paragraphs we will  show that whenever the link of the singularity
is a rational homology sphere the {\em topological arrangement} $\cA_{{\rm top}}$
is indeed topological, it depends only on the combinatorics of the resolution graph.
We will need the following technical definition.

\begin{lemma}\label{DL}  \cite{CDGb}
\begin{enumerate}
\item\label{DL1}
For any  $l'\in L'$ and subset $I\subset \cV$ there exists a
unique minimal subset $\bI\subset \cV$ which contains $I$, and
has the following property:
\begin{equation}\label{star}
\mbox{there is no $v\in \cV\setminus \bI$ with
$(E_v,l'+E_\bI)>0$.}\end{equation}

\item\label{DL2} $\bI$ can
be found by the next algorithm: one constructs a sequence
$\{I_m\}_{m=0}^k$ of subsets of $\cV$, with $I_0=I$,
$I_{m+1}=I_m\cup \{v(m)\}$, where the index $v(m)$ is determined
as follows. Assume that $I_m$ is already constructed. If $I_m$
satisfies (\ref{star}) we stop and $m=k$. Otherwise, there exists
at least one $v$ with $(E_v,l'+E_{I_m})>0$. Take $v(m)$ one of
them and continue the algorithm  with $I_{m+1}$. Then
$I_k=\bI$.
\end{enumerate}
\end{lemma}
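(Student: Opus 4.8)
The plan is to treat property (\ref{star}) as a closure condition and to exploit the off-diagonal non-negativity of the intersection form, namely that $(E_v,E_w)\ge 0$ whenever $v\ne w$. Call a subset $S\subset\cV$ \emph{$l'$-closed} if there is no $v\in\cV\setminus S$ with $(E_v,l'+E_S)>0$; thus (\ref{star}) asserts precisely that $\bI$ is $l'$-closed. Observe first that $\cV$ itself is trivially $l'$-closed, so the family of $l'$-closed subsets containing $I$ is non-empty, and it is finite since $\cV$ is finite.

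For part (\ref{DL1}) the key step is to show that the intersection of two $l'$-closed sets is again $l'$-closed. Given $l'$-closed sets $A,B$ and a vertex $v\notin A\cap B$, say $v\notin A$, I would compare $(E_v,l'+E_{A\cap B})$ with $(E_v,l'+E_A)$: their difference equals $-\sum_{w\in A\setminus B}(E_v,E_w)$, which is $\le 0$ because $v\ne w$ for each such $w$. Hence $(E_v,l'+E_{A\cap B})\le (E_v,l'+E_A)\le 0$, so $A\cap B$ is $l'$-closed. Consequently the intersection of all $l'$-closed sets containing $I$ is itself $l'$-closed, contains $I$, and is contained in every such set; this is exactly the asserted unique minimal $\bI$.

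For part (\ref{DL2}), termination is immediate: each step enlarges $I_m$ and $\cV$ is finite, so the algorithm stops at some $I_k$ which, by the stopping rule, satisfies (\ref{star}); being $l'$-closed and containing $I$, it must contain $\bI$. For the reverse inclusion I would prove by induction that $I_m\subseteq\bI$ for all $m$. The base case $I_0=I\subseteq\bI$ is clear. For the inductive step, suppose $I_m\subseteq\bI$ and the algorithm selects $v(m)$ with $(E_{v(m)},l'+E_{I_m})>0$. If $v(m)\notin\bI$, then since $I_m\subseteq\bI$ the quantity $(E_{v(m)},E_\bI-E_{I_m})=\sum_{w\in\bI\setminus I_m}(E_{v(m)},E_w)$ is $\ge 0$, which gives $(E_{v(m)},l'+E_\bI)\ge (E_{v(m)},l'+E_{I_m})>0$, contradicting that $\bI$ is $l'$-closed. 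Hence $v(m)\in\bI$ and $I_{m+1}\subseteq\bI$; thus $I_k\subseteq\bI$ and therefore $I_k=\bI$.

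The main obstacle is really concentrated in a single monotonicity estimate that I use twice: enlarging the set inside $l'+E_{(\cdot)}$ can only increase the pairing $(E_v,\,\cdot\,)$ when $v$ lies outside the enlarged set. Once this off-diagonal sign fact is isolated, both the stability of $l'$-closedness under intersection and the correctness of the greedy algorithm follow formally; a pleasant byproduct is that the output $I_k=\bI$ is independent of the choices of the $v(m)$.
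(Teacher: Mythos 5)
Your proof is correct. Note that the paper itself gives no argument for this lemma: it is quoted from \cite{CDGb}, so the comparison must be with the standard (Laufer-type) proof found there. That classical argument runs the algorithm of part (\ref{DL2}) first and shows by induction that every $I_m$ is contained in \emph{any} set containing $I$ and satisfying (\ref{star}); since the algorithm terminates in a set that itself satisfies (\ref{star}), existence, uniqueness, minimality, and independence of choices all drop out at once from the algorithm. You instead establish existence abstractly, by showing that the family of ``$l'$-closed'' sets is stable under intersection (a Moore-family argument), and only then verify that the greedy algorithm lands on the minimum. Both routes rest on exactly the same monotonicity fact that you correctly isolate, namely $(E_v,E_w)\ge 0$ for $v\neq w$, hence $(E_v,l'+E_S)\le (E_v,l'+E_T)$ whenever $S\subseteq T$ and $v\notin T$; your application of it in the intersection step (comparing $A\cap B$ with $A$ for $v\notin A$) and in the induction $I_m\subseteq J(l',I)$ is sound, and your implicit use of the fact that each chosen $v(m)$ lies outside $I_m$ (so the algorithm strictly enlarges and terminates) is justified because $v(m)$ witnesses the failure of (\ref{star}). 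What your variant buys is a cleanly separated existence statement valid for any symmetric bilinear form with non-negative off-diagonal entries; what the classical route buys is economy, since one induction proves everything simultaneously.
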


\begin{proposition}\label{prop:tarr} \cite{CDGb}
Assume that the resolution graph is a tree of rational curves. For any $l'\in L'$
and $I\subset \cV$ write $J(I):=J(l',I)$. Then the following facts hold.
\begin{enumerate}
\item[(a)] One has the following commutative diagram with exact rows

\begin{picture}(400,85)(0,-10)
\put(150,60){\makebox(0,0){$
0\ \to \ H^0(\cO_{E-E_{J(I)}}(-l'-E_{J(I)}))\ \ \to \ \ H^0(\cO_{E}(-l'))\ \
\stackrel{k}{\twoheadrightarrow} \ \  H^0(\cO_{E_{J(I)}}(-l')) \ \to \ 0$}}
\put(148,30){\makebox(0,0){$
\ 0\ \to  \ \  \ H^0(\cO_{E-E_I}(-l'-E_I))\ \ \ \ \ \
\to \ \ H^0(\cO_{E}(-l'))\ \
\to \ \  H^0(\cO_{E_I}(-l')) \hspace{1cm} $}}
\put(115,0){\makebox(0,0){$\cap_{v\in I}\, T_v(l') \ \ \ \ \ \ \ \
\hookrightarrow \ \ \ \ \ \ \
T(l')$}}
\put(240,45){\makebox(0,0){$\downarrow \,   i$}}
\put(160,50){\line(0,-1){8}}\put(162,50){\line(0,-1){8}}
\put(75,45){\makebox(0,0){$\downarrow \, j$}}
\put(63,45){\makebox(0,0){$\simeq$}}
\put(160,20){\line(0,-1){8}}\put(162,20){\line(0,-1){8}}
\put(72,20){\line(0,-1){8}}\put(74,20){\line(0,-1){8}}
\end{picture}

\noindent where $j$\, is an isomorphism
(hence $\cap_{v\in I}\, T_v(l')=\cap_{v\in J(I)}\, T_v(l')$),
 $i$\,
 is injective and $k$\, is surjective.

\item[(b)] $\dim\,\cap_{v\in J(I)}T_v(l') =
\chi(\cO_{E-E_{J(I)}}(-l'-E_{J(I)}))$\, $=\chi(l'+E)-\chi(l'+E_{J(I)})$.

\item[(c)]
In particular,  if $J(I_1)=J(I_2)$ then $\cap_{v\in I_1}T_v(l')=\cap_{v\in I_2}T_v(l')$, and
 if $J(I_1)\varsubsetneq J(I_2)$ then
$\cap_{v\in I_1}T_v(l')\varsupsetneq\cap_{v\in I_2}T_v(l')$.
Therefore, $J(I)$ is the unique maximal
subset $I_{max}\subset \cV$, such that $I\subset I_{max}$, and
$\cap_{v\in I}T_v(l')=\cap_{v\in I_{max}}T_v(l')$.

\item[(d)] Part (b) for $I=\emptyset$ reads as follows:
    $\dim\,T(l')=
    \dim\,\cap_{v\in J(\emptyset)}T_v(l')
     =\chi(l'+E)-\chi(l'+E_{J(\emptyset)})$.
     Hence, if\, $l'\in\cS'$ then   $\dim\,T(l')=-(l',E)+1$.
\item[(e)] ${\rm codim}\, (\,\cap_{v\in I}T_v(l')\hookrightarrow T(l')\,)=
\chi(l'+E_{J(I)})-\chi(l'+E_{J(\emptyset)})$.
\item[(f)] In particular, the arrangement complement is non--empty if and only if $J(\emptyset)=\emptyset$ (if and only if\, $l'\in\cS'$).

\end{enumerate}
\end{proposition}
Therefore,
if the graph is a tree of rational curves then the isotopy type of the
arrangement $\cA_{{\rm top}}$ depends only on the combinatorial data of
the graph.
Note also  that $J(l', I)$,
and  the topological linear subspace arrangement $\cA_{{\rm top}}$ too,
depend only on the $E^*$--coefficients of $l'$ and on the {\it shape of the graph $\Gamma$}, that is, on the valencies $\{\kappa_v\}_{v\in \calv}$ but not on the Euler numbers
$\{E_v^2\}_v$.

At topological Euler characteristic level one has:
\begin{corollary}\label{cor:tarreu} If the graph is a tree of rational curves and $l'\in \cS'$ then
$$\chi_{{\rm top}}(\, \bP(T(l')\setminus \cup_{v}T_v(l'))\,)=
\sum_{I\subset \cV}(-1)^{|I|+1}\chi(l'+E_{J(l',I)}).$$
\end{corollary}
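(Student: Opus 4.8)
The plan is to combine the general Euler-characteristic formula for projectivized arrangement complements (Lemma \ref{lem:Arrang}) with the explicit dimension computation supplied by Proposition \ref{prop:tarr}. First I would apply Lemma \ref{lem:Arrang} to the topological arrangement $\cA_{{\rm top}}(l')=\{T_v(l')\}_{v\in\cV}$ sitting inside $V=T(l')$, taking $\Lambda=\cV$ and $V_I=\cap_{v\in I}T_v(l')$ with $V_\emptyset=T(l')$. This immediately yields
$$\chi_{{\rm top}}(\bP(T(l')\setminus\cup_v T_v(l')))=\sum_{I\subset\cV}(-1)^{|I|}\dim\,\cap_{v\in I}T_v(l').$$

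Next I would rewrite each intersection dimension. By Proposition \ref{prop:tarr}(a) one has $\cap_{v\in I}T_v(l')=\cap_{v\in J(I)}T_v(l')$, and then part (b) evaluates this dimension as $\chi(l'+E)-\chi(l'+E_{J(I)})$; this is exactly where the hypothesis that $\Gamma$ be a tree of rational curves enters, since it is what makes Proposition \ref{prop:tarr} applicable. Substituting, the right-hand side splits as
$$\chi(l'+E)\cdot\sum_{I\subset\cV}(-1)^{|I|}\ -\ \sum_{I\subset\cV}(-1)^{|I|}\chi(l'+E_{J(I)}).$$

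The key observation is that the first sum vanishes: since $\cV\neq\emptyset$ we have $\sum_{I\subset\cV}(-1)^{|I|}=(1-1)^{|\cV|}=0$, so the constant term $\chi(l'+E)$ drops out entirely. What survives is $-\sum_{I\subset\cV}(-1)^{|I|}\chi(l'+E_{J(I)})=\sum_{I\subset\cV}(-1)^{|I|+1}\chi(l'+E_{J(l',I)})$, which is precisely the asserted identity. The hypothesis $l'\in\cS'$ guarantees, via Proposition \ref{prop:tarr}(f), that the arrangement complement is actually non-empty, so the corollary is not merely a vacuous $0=0$ statement; the underlying algebraic identity itself, however, holds for every $l'\in L'$.

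I expect no substantive obstacle here: the argument is a direct bookkeeping combination of the two cited results. The only point requiring a moment's care is the cancellation of the $\chi(l'+E)$ term, which is the standard vanishing of the alternating sum $\sum_{I\subset\cV}(-1)^{|I|}$ over the nonempty index set $\cV$, and the bookkeeping of the sign when passing from $(-1)^{|I|}$ to $(-1)^{|I|+1}$.
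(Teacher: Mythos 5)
Your argument is correct and is exactly the paper's proof: the paper's own justification is the one-line instruction ``Use Lemma \ref{lem:Arrang} and Proposition \ref{prop:tarr}(b)'', and your write-up simply carries out that combination in detail, including the cancellation of the $\chi(l'+E)$ term via the vanishing of $\sum_{I\subset\cV}(-1)^{|I|}$. No gaps; the extra remarks about non-emptiness via Proposition \ref{prop:tarr}(f) are harmless side observations.
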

\begin{proof}
Use Lemma \ref{lem:Arrang} and Proposition \ref{prop:tarr}(b).
\end{proof}

\begin{example} \label{MTH}
Using special vanishing theorems and computation sequences of rational and elliptic
singularities (cf. \cite{Ninv,Five}) one can
prove the following results as well  (see e.g. \cite{NBOOK,NLinSp}).

(I) Assume the following situations:
\begin{enumerate}
\item[(a)]\label{rat}
either $(X,o)$ is rational,  $\pi$ is arbitrary resolution, and $l'\in \cS'$ is arbitrary,
\item[(b)]\label{me}
or $(X,o)$ is minimally elliptic singularity with $H^1(\tX,\Z)=0$,  $\pi$ is a  resolution
such that the support of the
elliptic cycle equals $E$, and we also assume that for the fixed  $l'\in \cS'$
there exists a computation sequence $\{x_i\}_i $ for the fundamental cycle $Z_{min}$ (in the sense of Laufer \cite{Laufer72}),
which contains $E$ as one of its terms,
and it jumps (that is, $(x_i,E_1)=2$) at some $E_1$ with $(E_1,l')<0$.
\end{enumerate}
Then  the topological and analytic arrangements at $l'$ agree,
$\cA_{{\rm top}}(l')=\cA_{{\rm an}}(l')$. 

(II)  For minimally elliptic singularities it can happen that $\cA_{{\rm top}}(l')\not=\cA_{{\rm an}}(l')$,
even for the minimal resolution. E.g., in the case of the minimal good resolution of
$\{x^2+y^3+z^7=0\}$, or in the case of minimal resolution of $\{x^2+y^3+z^{11}=0\}$ (which is good),
for $l=Z_{min}$ one has  $\dim( T(Z_{min}))=2$ and  $\dim( A(Z_{min}))=1$.

(III) For any $l'\in\calS'$ one has the exact sequence
$$0\to A(l')\to T(l') \to H^1(\calO_{\tX}(-l'-E))\to H^1(\calO_{\tX}(-l'))$$
Hence, $\cA_{{\rm an}}(l')=\cA_{{\rm top}}(l')$ whenever $H^1(\calO_{\tX}(-l'-E))=0$.
This happens e.g. if $l'=\sum _va_vE^*_v$ with $a_v\gg 0$, in which case
$H^1(\calO_{\tX}(-l'-E))=0$ by the Grauert--Riemenschneider Vanishing Theorem.
\end{example}

\subsection{The topological series $\cZ(\bt)$}\cite{CDG,CDGEq,Five,NLinSp,NICM,NBOOK}\label{mi}
 The series $\cZ(\bt)\in \setZ[[\cS']]$
is defined by
the rational function $z({\bf x})$ in variables $x_v=\bt^{E_v^*}$, or by its Taylor
expansion at the origin,  where
\begin{equation}\label{zetaelso}
z({\bf x}):=\prod_{v\in \cV}(1-x_v)^{\kappa_v-2},
 \end{equation}
 and $\kappa_v $ is the valency of the vertex $v$.
Hence it is the expansion of $\cZ(\bt)=\prod_v (1-\bt^{E_v^*})^{\kappa_v-2}$.

We start to list some other appearances of $\cZ(\bt)$.

If $\Sigma$ is a topological space, let $S^a\Sigma$ ($a\geq 0$) denote its symmetric
product $\Sigma^a/{\mathfrak S}_a$. For $a=0$, by convention, $S^0\Sigma$ is a point. Then, by
Macdonald formula \cite{MD},
\begin{equation}\label{eq:MD}
\sum_{a\geq 0} \chi_{{\rm top}}(S^a\Sigma)\,x^a=(1-x)^{-\chi(\Sigma)}.
\end{equation}
Since  $E_v\simeq \bP^1$, and $E_v^\circ$ is the
regular part of $E_v$, then $\chi_{{\rm top}}(E_v^\circ)=2-\kappa_v$.

\vspace{2mm}

\noindent
{\bf The first formula  of $Z(\bt)$  \cite{CDG,CDGEq}.}
With the notation  $\bx^\ba  = x_1^{a_1}\cdots x_s^{a_s}$,
\begin{equation}\label{eq:Zchi}\begin{split}
z(\bx)&=\prod_v \ \sum_{a_v\geq 0}\chi_{{\rm top}}(S^{a_v}E^\circ _v)\,x^{a_v}_v=
 \sum_{\ba\geq 0}\ \prod_v \chi_{{\rm top}}(S^{a_v}E^\circ _v) \bx^{\ba}\\
&=\sum_{\ba\geq 0}\Big( \prod_v (-1)^{a_v}\binom{\kappa_v-2}{a_v}\Big) \ \bx^{\ba},\end{split}
\end{equation}
where, for {\it any} integer $b$, $\binom{b}{0}=1$ and $\binom{b}{a}=b(b-1)\cdots (b-a+1)/a!$
\ for $a\in \Z_{>0}$ as usual.

The next interpretation of $\cZ(\bt)$ is in terms of $J(l',I)$,  cf.  \ref{DL}.

\vspace{2mm}

\noindent {\bf The second formula of $\cZ(\bt)$ \cite{CDGb}.}
\begin{equation}\label{EQ:100}
\cZ(\bt)=\sum_{l'\in\cS'}\ \sum_{I\subset \cV}\
(-1)^{|I|+1}\chi(l'+E_\bI){\bt}^{l'}.
\end{equation}

\begin{remark}\label{rem:PcomparedZ} The above formula can be compared with
\begin{equation*}
\cP(\bt)=\sum_{l'\in\cS'}\ \sum_{I\subset \cV}\
(-1)^{|I|+1}\Big( \ \chi(l'+E_{J(l',I)})-h^1(\cO_{\tX}(-l'-E_{J(l',I)}))\Big) \bt^{l'}.
\end{equation*}
This combined with (\ref{EQ:100}) gives
\begin{equation*}
\cZ(\bt)-\cP(\bt)=\sum_{l'\in\cS'}\ \sum_{I\subset \cV}\
(-1)^{|I|+1}\ h^1(\cO_{\tX}(-l'-E_{J(l',I)}))\ \bt^{l'}.
\end{equation*}
\end{remark}

\vspace{2mm}

\noindent
 {\bf The third formula  of $\cZ(\bt)$.}
\begin{equation}\label{cor:tarrZ}
\cZ(\bt)=\sum_{l'\in \cS'}\, \chi_{{\rm top}}(\, \bP(T(l')\setminus \cup_{v}T_v(l'))\,)
\cdot \bt^{l'}.\end{equation}
This follows from the combination of  Corollary \ref{cor:tarreu} and (\ref{EQ:100}).
Then, by  \ref{MTH} and Corollaries  \ref{cor:PArr} and \ref{cor:tarrZ} one also has:

\begin{corollary}\label{MTH2}  $\cP(\bt)=\cZ(\bt)$ in the following cases:
\begin{enumerate}
\item[(a)]\label{rat222}
$(X,o)$ is rational, and  $\pi$ is arbitrary resolution,
\item[(b)]\label{me222}
or $(X,o)$ is minimally elliptic singularity, and it satisfies the assumptions of  \ref{MTH}(I).
\end{enumerate}\end{corollary}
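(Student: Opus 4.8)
The plan is to prove the identity coefficient by coefficient, matching the $\bt^{l'}$--coefficient of $\cP(\bt)$ with that of $\cZ(\bt)$ for every $l'\in L'$, having first reduced both sides to a topological Euler characteristic of a projectivized arrangement complement. First I would invoke Corollary \ref{cor:PArr}, which identifies the coefficient $\pp(l')$ of $\cP(\bt)$ with $\chi_{\rm top}(\bP(A(l')\setminus\cup_v A_v(l')))$ for $l'\in\cS'$, together with the third formula (\ref{cor:tarrZ}), which identifies the coefficient of $\cZ(\bt)$ at $l'\in\cS'$ with $\chi_{\rm top}(\bP(T(l')\setminus\cup_v T_v(l')))$. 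Thus the two series agree at $l'$ precisely when the analytic and topological projectivized arrangement complements at $l'$ carry the same Euler characteristic.

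Second, I would feed in the key geometric input, namely Example \ref{MTH}(I): under hypothesis (a) (rational germ, arbitrary resolution) or hypothesis (b) (minimally elliptic under the stated computation-sequence condition) one has $\cA_{\rm an}(l')=\cA_{\rm top}(l')$ for the relevant $l'\in\cS'$. Concretely this means $A(l')=T(l')$ and $A_v(l')=T_v(l')$ for every $v\in\cV$, so the two arrangement complements are literally identical and a fortiori have equal Euler characteristics. Combining with the first step gives $\pp(l')=[\cZ(\bt)]_{l'}$ for all $l'\in\cS'$.

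Third, to pass from coefficient agreement on $\cS'$ to equality of the full series, I would check the complementary range. For $l'\notin\cS'$, as noted right after Definition \ref{def:arr}, there is a vertex $v$ with $(E_v,l')>0$, forcing $T_v(l')=T(l')$ and $A_v(l')=A(l')$; hence both projectivized complements are empty and contribute $0$ on each side. This is consistent with the fact that $\cP(\bt)$ is supported in $\cS'$ (from (\ref{eq:1})) and that $\cZ(\bt)$ is supported in $\cS'$ by its third formula. Since both series are supported in $\cS'$ and their coefficients coincide there, we conclude $\cP(\bt)=\cZ(\bt)$ in both cases.

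Finally, a word on where the real difficulty lies. The corollary itself is essentially a bookkeeping consequence of three prior results, so the only care needed is to track the support and to use the convention $\chi_{\rm top}(\emptyset)=0$ for the empty complement. The substantive obstacle is hidden upstream, in Example \ref{MTH}(I): establishing $\cA_{\rm an}=\cA_{\rm top}$ amounts to the degeneration of the exact sequence of Example \ref{MTH}(III), i.e. to the vanishing $H^1(\cO_{\tX}(-l'-E))=0$ along the relevant range. For rational germs this follows from rationality, while in the minimally elliptic case it rests on a Laufer-type computation sequence for the fundamental cycle that jumps at a component $E_1$ with $(E_1,l')<0$. That vanishing, rather than the formal Euler-characteristic matching, is the crux of the whole argument.
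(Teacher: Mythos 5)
Your proposal is correct and follows essentially the same route as the paper: the paper derives this corollary precisely by combining Example \ref{MTH}(I) (equality of the analytic and topological arrangements) with Corollary \ref{cor:PArr} and the third formula (\ref{cor:tarrZ}), exactly as you do. Your extra bookkeeping about the support outside $\cS'$ and your closing remark that the real content lives upstream in the vanishing behind \ref{MTH}(I) are both accurate and consistent with the paper's (very brief) justification.
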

In fact, under the condition of Corollary \ref{MTH2}, in \cite[p. 280-281]{CDGb}
it is proved that
 \begin{equation}\label{eq:cusp**}\cP(\bt)=\bt^0+\sum_{l'\in \calS'\setminus \{0\}}
 \sum_I (-1)^{|I|+1} \chi(l'+E_{J(l',I)})\cdot \bt^{l'}.\end{equation}

\begin{remark}\label{rem:MTH}
Part {\it (b)} can be improved by adding some additional cases when $\cA_{{\rm an}}(l')\not=
\cA_{{\rm top}}(l')$, but the Euler characteristics of the two arrangement complements  agree.
 E.g.,  if $(X,o)$ is minimally elliptic singularity whose
minimal resolution is good, and if $\pi$ is this minimal resolution,  then  $\cP(\bt)=\cZ(\bt)$.
In general,  $\cP(\bt)=\cZ(\bt)$ if and only if $(X,o)$ is a splice quotient singularity \cite{LineBundles,NICM}.
\end{remark}

\subsection{$\mathbf{\bP(T(l')\setminus \cup_{v}T_v(l'))}$ as a
space of effective  Cartier divisors}\label{bek:Eca} \cite{NNI,NBOOK}
For any cycle $Z\in L$, $Z\geq E$, let ${\rm ECa}(Z)$ be the set of effective
Cartier divisors on $Z$.  Their supports are zero--dimensional in $E$. Taking the
class of a Cartier  divisor provides the {\it Abel map} ${\rm ECa}(Z)\to {\rm Pic}(Z)$.
Let ${\rm ECa}^{l'}(Z)$ be the subset of ${\rm ECa}(Z)$, which consists of divisors
whose associated line bundles
have Chern class $l'\in L'$. Set  $c^{l'}:{\rm ECa}^{l'}(Z)\to
{\rm Pic}^{l'}(Z)$  for the restricted Abel map.
Regarding the existence of ${\rm ECa}^{l'}(Z)$ as an algebraic variety
we make the following comment.
First, by a theorem of Artin \cite[3.8]{Artin69}, there exists an affine algebraic variety $Y$
and a point $y\in Y$ such that $(Y,y)$ and $(X,o)$ have isomorphic formal completions.
Then, according to Hironaka \cite{Hironaka65},
$(Y,y)$ and $(X,o)$ are analytically isomorphic. In particular, we can regard $Z$
as a projective  algebraic scheme, in which case ${\rm ECa}^{l'}(Z)$ together with the algebraic Abel map,
 as part of the general theory,
was constructed by Grothendieck \cite{Groth62}, see also  the article of
Kleiman \cite{Kleiman2013}.  In particular,
$c^{l'}:{\rm ECa}^{l'}(Z)\to {\rm Pic}^{l'}(Z) \ \mbox{ is algebraic}$.
For an explicit description and several properties see \cite{NNI,NNII,NNIII}.

From definition, ${\rm ECa}^{l'}(Z)\not=\emptyset$ if and only if there exists $\cL\in
{\rm Pic}^{l'}(Z)$, which  has a global section vanishing somewhere, but it has no
fixed components.  If this happens then  $l'\not=0$ and $H^0(\cL|_{E_v})
\not =0$ for any $v$, hence $-l'\in \cS'\setminus \{0\}$. Conversely, if
$-l'\in \cS'\setminus \{0\}$ then one constructs elements of ${\rm ECa}^{l'}(Z)$
by some generic cuts of $E$ enumerated by $l'$. Hence, it is natural
 to modify the definition, and redefine formally ${\rm ECa}^0(Z)$ as a point, the space of the `empty divisor' $\{\emptyset\}$. It is sent by the Abel map to $\calO_Z$.
Hence, finally, ${\rm ECa}^{l'}(Z)\not=\emptyset$ if and only if $-l'\in \cS'$.

In \cite{NNI} is proved that for any $-l'\in \cS'$,  ${\rm ECa}^{l'}(Z)$ is irreducible, quasiprojective, smooth
and of dimension $-(l',Z)$.

For any $\cL\in {\rm Pic}^{l'}(Z)$ define $H^0(Z, \cL)_{{\rm reg}}$, the set of regular sections (or, section without fixed components)
 by $H^0(Z,\cL)\setminus \cup_v H^0(Z-E_v,\cL(-E_v))$. Then the preimage of $\cL$ by the Abel map is $H^0(Z,\cL)_{{\rm reg}}/H^0(\calO_Z^*)$ \cite[\S 3]{Kl}.

  Next, assume that $Z=E$ and $l'\in\cS'$. In this case $h^1(\calO_E)=0$, ${\rm Pic}^{-l'}(E)$
  consists of a point, say   $ \{\calO_{E}(-l' )\}$, and
  $H^0(\calO_E^*)=\C^*$. Hence,  by the above discussion,
  \begin{equation}\label{eq:ECA}
  {\rm ECa}^{-l'}(E)=
  H^0(\calO_E(-l'))_{{\rm reg}}/\C^*= \bP(T(l')\setminus \cup_{v}T_v(l')).
  \end{equation}

\vspace{2mm}
\noindent  {\bf  The forth  formula  of $\cZ(\bt)$.}
 If the link is a rational homology sphere then
 $$\cZ(\bt)=\sum_{l'\in L'} \chi_{{\rm top}}({\rm ECa}^{-l'}(E))\cdot \bt^{l'}.$$

\subsection{The extension of $\cZ(\bt)$ to the Grothendieck ring.}\label{miGroth}
The information contained in  $\cZ(\bt)$ can be improved
if we modify  the `third formula'
$\cZ(\bt)=\sum_{l'\in\cS'} \chi_{{\rm top}} (\bP(T(l')\setminus \cup_v T_v(l'))
\bt^{l'}$. Namely, we replace
the topological Euler characteristic of $\bP(T(l')\setminus \cup_v T_v(l'))$
with the class of this space in the Grothendieck group of complex quasi--projective varieties.
\begin{equation}\label{eq:ZGroth}
\cZ(\bL,\bt)=\sum_{l'\in\cS'} \, [\bP(T(l')\setminus \cup_v T_v(l'))]\, \bt^{l'}.
\end{equation}
Let $\bL$ be the class of the 1--dimensional affine space. Then, by
inclusion--exclusion principle (as the analogue of \ref{lem:Arrang})
one has the following. If $\{V_\alpha\}_{\alpha\in\Lambda}$
is a finite family of linear subspaces of a finite dimensional linear space $V$, and for $I\subset \Lambda $ one writes $V_I:=\cap_{\alpha\in I}V_\alpha$, then
\begin{equation*}\label{eq:Groth}
[V\setminus \cup_\alpha V_\alpha]=\sum_I(-1)^{|I|} \bL^{\dim(V_I)},\ \
[\bP(V\setminus  \cup_\alpha V_\alpha)]=(\sum_I(-1)^{|I|} \bL^{\dim(V_I)})/(\bL-1).
\end{equation*}
Hence, using \ref{prop:tarr}, (\ref{eq:ZGroth}) reads as
\begin{equation}\label{eq:ZGroth2}
\begin{split}
\cZ(\bL,\bt)& =\frac{1}{\bL-1}\cdot
\sum_{l'\in\cS'} \, \sum_{I\subset \cV} (-1)^{|I|}\,
\bL^{\chi(l'+E)-\chi(l'+E_{J(l',I)})}\, \bt^{l'}\\
& =
\sum_{l'\in\cS'} \, \sum_{I\subset \cV} (-1)^{|I|}\ \cdot
\frac{ \bL^{\chi(l'+E)-\chi(l'+E_{J(l',I)})}-1}{\bL-1} \ \bt^{l'}.
\end{split}
\end{equation}
Note that $\lim_{\bL\to 1}\cZ(\bL,\bt)=\cZ(\bt)$.
The analogue of the topological/combinatorial identity (\ref{EQ:100})
is:
\begin{theorem}\label{th:Groth} \ \cite{Nagy,NICM,NLinSp}
\begin{equation}\label{eq:motZL}
\cZ(\bL,\bt)=\frac{
\prod_{(u,v)\in\cE}\ (1-\bt^{E_u^*}-\bt^{E_v^*}+\bL \bt^{E_u^*+E_v^*})}
{\prod_{v\in\cV} \ (1-\bt^{E^*_v})(1-\bL\bt^{E^*_v})}.
\end{equation}
\end{theorem}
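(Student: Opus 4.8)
The plan is to route the entire computation through the spaces of effective Cartier divisors, exploiting the identification (\ref{eq:ECA}). First I would observe that, since the link is a rational homology sphere and $l'\in\cS'$, equation (\ref{eq:ECA}) gives $\bP(T(l')\setminus\cup_v T_v(l'))={\rm ECa}^{-l'}(E)$ as algebraic varieties, so their classes in the Grothendieck ring agree. Plugging this into the definition (\ref{eq:ZGroth}) yields the motivic refinement of the `fourth formula',
$$\cZ(\bL,\bt)=\sum_{l'\in\cS'}[{\rm ECa}^{-l'}(E)]\,\bt^{l'}.$$
It therefore suffices to compute the classes $[{\rm ECa}^{-l'}(E)]$ and to assemble their generating series, and the whole content of the theorem becomes a local-to-global statement about Cartier divisors on $E$.

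The heart of the argument is a decomposition of ${\rm ECa}^{-l'}(E)$ according to the local geometry of $E$. Away from the nodes $E$ is smooth, so a Cartier divisor restricts on each open component $E_v^\circ$ to an effective $0$-cycle, i.e. a point of some symmetric product $S^{m_v}(E_v^\circ)$. At a node $p_e$ with $e=(u,v)$ the local ring is $\C[[x,y]]/(xy)$, and I would show that an effective Cartier divisor either avoids $p_e$ or passes through it with strictly positive multiplicity $a_e\ge 1$ on the $u$-branch \emph{and} $b_e\ge 1$ on the $v$-branch; indeed the value of a local equation $f$ at the node is single-valued, so one branch-multiplicity vanishes if and only if the other does. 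Since a divisor is uniquely determined by, and can be freely prescribed from, these mutually disjoint local pieces, stratifying by the collection of node bidegrees and using $[\bigsqcup]=\sum[\,\cdot\,]$ and $[\,\cdot\times\cdot\,]=[\,\cdot\,]\,[\,\cdot\,]$ gives, with $x_v=\bt^{E_v^*}$ and total degree bookkeeping $n_v=m_v+\sum_{e\ni v}(\text{branch multiplicity})$,
$$\sum_{l'\in\cS'}[{\rm ECa}^{-l'}(E)]\,\bt^{l'}=\prod_{v\in\cV}\Big(\sum_{m\ge0}[S^m(E_v^\circ)]\,x_v^m\Big)\cdot\prod_{e=(u,v)\in\cE}\mathsf{N}_e(x_u,x_v),$$
where $\mathsf{N}_e$ denotes the generating series of the local node moduli.

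Next I would evaluate the two local series explicitly. For the vertex factors I would use multiplicativity of $\sum_m[S^m(\cdot)]\,x^m$ over disjoint unions together with $\sum_m[S^m(\C)]x^m=(1-\bL x)^{-1}$ and $\sum_m[S^m(\mathrm{pt})]x^m=(1-x)^{-1}$: realizing $E_v^\circ$ as $\C$ with $\kappa_v-1$ points removed gives $\sum_m[S^m(E_v^\circ)]\,x_v^m=(1-x_v)^{\kappa_v-1}/(1-\bL x_v)$ (consistent with $\kappa_v=0$ giving $S^m(\bP^1)=\bP^m$). The main obstacle, and the step carrying the essential geometry, is the node computation: I expect to prove that the local space of Cartier divisors at $p_e$ with fixed bidegree $(a_e,b_e)$, $a_e,b_e\ge1$, is isomorphic to $\C^*$, hence has class $\bL-1$. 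Concretely, one normalizes a local equation to the pair $(x^{a_e},\,c\,y^{b_e})$ on the two branches and checks that the ratio of leading coefficients $c\in\C^*$ is the unique invariant under multiplication by units of $\C[[x,y]]/(xy)$. This class being independent of $(a_e,b_e)$ is exactly what produces
$$\mathsf{N}_e(x_u,x_v)=1+(\bL-1)\sum_{a,b\ge1}x_u^ax_v^b=\frac{1-x_u-x_v+\bL x_ux_v}{(1-x_u)(1-x_v)}.$$

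Finally I would multiply the local factors and simplify. Since each vertex $v$ lies on exactly $\kappa_v$ edges, the denominators $\prod_{e=(u,v)}(1-x_u)^{-1}(1-x_v)^{-1}$ from the node factors collect to $\prod_v(1-x_v)^{-\kappa_v}$, which combines with the $(1-x_v)^{\kappa_v-1}$ from the vertex factors to leave a single $(1-x_v)^{-1}$; together with the $(1-\bL x_v)^{-1}$ this reproduces the denominator $\prod_{v\in\cV}(1-x_v)(1-\bL x_v)$, while the node numerators give $\prod_{(u,v)\in\cE}(1-x_u-x_v+\bL x_ux_v)$, which is precisely (\ref{eq:motZL}). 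As a sanity check on the bookkeeping I would verify the two extreme cases: a single vertex with no edges, where ${\rm ECa}^{-nE_v^*}(E)=\bP^n$ and $\sum_n[\bP^n]x^n=((1-x)(1-\bL x))^{-1}$, and a single edge joining two valency-one vertices, where both sides reduce to the same rational function.
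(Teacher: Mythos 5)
Your proposal is correct and coincides with the paper's own derivation: the paper obtains Theorem \ref{th:Groth} from the identification ${\rm ECa}^{-l'}(E)=\bP(T(l')\setminus \cup_v T_v(l'))$ of (\ref{eq:ECA}) (see Example \ref{ex:recover}) combined with Theorem \ref{th:GrothNEW}, whose proof is exactly your stratification of ${\rm ECa}^{-l'}(E)$ by degrees on the $E_v^\circ$ and bidegrees at the nodes. In particular, the two key local computations --- the symmetric-product series $\sum_{m\geq 0}[S^mE_v^\circ]\,x_v^m=(1-x_v)^{\kappa_v-1}/(1-\bL x_v)$ and the class $\bL-1$ of the local moduli of divisors at a node --- appear verbatim in the paper's argument.
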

\noindent
One defines similarly the analytic version as well:
$\cP(\bL,\bt)=\sum_{l'\in\cS'} \, [\bP(A(l')\setminus \cup_v A_v(l'))]\, \bt^{l'}$ \cite{CDGMot}. (This will be improved in the next section, cf. Example \ref{ex:recover}.

\section{The extension of the series  to cusp singularities.}\label{ss:CUSPS}
\subsection{Notations and preliminaries regarding cusps}\label{ss:PrelCusps}
Assume that $(X,o)$ is an arbitrary normal surface singularity with
$b_1(M)={\rm dim}\, H_1(M,\Q)$ not necessarily zero.
Then the long exact sequence of the pair
$(\widetilde{X},\partial \widetilde{X})$ with $L=H_2(\widetilde{X},\Z)$
and $L'=H_2(\widetilde{X},\partial \widetilde{X},\Z)$ gives
$$0\to L\to L' \to H_1(M,\Z)\to H_1(\widetilde{X},\Z)=\Z^{b_1(M)}\to 0,$$
where $b_1(M)=2\sum_{v}g_v+c(\Gamma)$, $c(\Gamma)$ being
the number of independent cycles in the dual graph $\Gamma$. Hence, in this case
$H:=L'/L$ is identified with the torsion part ${\rm Tors}H_1(M,\Z)$.

The point is that in general there exists no canonical splitting of the exact sequence
$0\to H\to H_1(M,\Z)\to \Z^{b_1(M)}\to 0$. Different choices of  splittings
$H_1(M,\Z)\to H$ composed with $\pi_1(M)\to H_1(M,\Z)$ provide essentially
different representations $\pi_1(M)\to H$, hence different $H$--coverings.
In particular, via such representations the definition of the natural line bundles
(following the method of \ref{FM}) is not well--defined. In fact, in general, any other
definition of the natural line bundles fails (either by this ambiguity, or by the
fact that ${\rm Pic}^0(\widetilde{X})=H^1(\widetilde{X}, \calo_{\widetilde{X}})/
H^1(\widetilde{X},Z)$ is not torsion free).

On the other hand, all other combinatorial invariants, e.g. $\calS'$, are defined similarly, see e.g. \cite{Five,Ninv}.

In the case of cusp singularities, $g_v=0$ for all $v\in\calv$, and $b_1(M)=c(\Gamma)=1$.
Furthermore, since $p_g=1$ we also have ${\rm Pic}^0(\widetilde{X})=\C/\Z$.

In the previous section, in the definition of the series
 $\cP(\bt)$,   we used the
assumption that the link is rational homology sphere.
This  was really necessary, since the definition was based on
the existence of
the natural line bundles (defined via the universal abelian covering).
The point is that
usually the cohomological properties  of a natural line bundle and of a line bundle with the same Chern class differ, hence the identification of the natural bundles is crucial.
(On the other hand, if we wish to define only the $H$--equivariant part
of $\cP(\bt)$, namely $P_0(\bt):=\sum_{l\in L}\mathfrak{p}(l)\bt^l$,
then we do not need any covering, and it can be defined as the Poincar\'e
series of the divisorial filtration of $\calo_{X,o}$ associated with the irreducible
components of $\pi^{-1}(0)=E$. Nevertheless, in this way we loose essential part of the theory, namely all the geometry related with not integral Chern classes.)

Let us look also at the `old' definition of $\cZ(\bt)$ too, cf. \ref{mi}.
E.g., for the minimal resolution of cusps (when
$\kappa_v=2$ for all $v$),
(\ref{zetaelso}) gives $\cZ(\bt)\equiv 1$, an object which definitely carries no information.

Hence, in general, the possible extensions  of the series $\cP(\bt)$ and $\cZ(\bt)$ are seriously obstructed.

However, in this section we explain  that an  extension can be done even if the link is not rational homology sphere, at least in the case of cusps.
This might serve as a model for further generalizations (at least for the cases
when all the exceptional curves are rational).
In the case of cusps several analytic vanishing statements are present, facts which make the definition and results work.

\subsection{The series $\cP(\bt)$ for cusps}\label{ss:PCusps}
Let us assume that $(X,o)$ is a cusp singularity, and we fix its minimal
resolution $\tX$. The definition of $\cP(\bt)$ can be done in two different ways.
The first one is a `naive' one: one defines  $\cP(\bt)$ by the identity (\ref{eq:1}),
  \begin{equation}\label{eq:CUSPP}
 \pp(l')=\sum_{I\subset \cV}\, (-1)^{|I|}\, \dim \,\frac{H^0(\tX, \cO_{\tX}(-l'-E_I))}
  {H^0(\tX, \cO_{\tX}(-l'-E))}.\end{equation}
once we clarify the meaning of
 $\calO_{\tX}(-l')\in {\rm Pic}(\tX)$ for any $l'\in \calS'$.

 Before we make the choice of  $\calO_{\tX}(-l')\in {\rm Pic}(\tX)$ we make two
 remarks. Let us fix any line bundle $\cL\in {\rm Pic}(\tX)$ with $c_1(\cL)=l'$.
 Then, for any effective $l\in L$,
 from the cohomological exact sequence of $0\to \cL(-l)\to \cL\to \cL|_l\to 0$,
 we have
 $$\dim \, H^0(\cL)/H^0(\cL(-l))-\chi(l)-(l,l')+h^1(\cL(-l))-h^1(\cL)=0.$$
 On the other hand, by a Laufer type algorithm,
 for any $\cL\in{\rm Pic}(\widetilde{X})$, there exists $l\in L_{\geq 0}$ such that
 $c_1(\cL\otimes \calo_{\widetilde{X}}(-l))\in-\calS'$ and
 $h^1(\cL)-h^1(\cL\otimes \calo_{\widetilde{X}}(-l))$ is topological (see e.g.
 \cite[Prop. 4.3.3]{Gradedroots}. Next, for $\cL\in {\rm Pic}(\tX)$ with $c_1(\cL)
 \in -\calS'$ one has (cf. \cite{Laufer77}, \cite[p. 333] {Five}, \cite[1.7]{Rorh}, \cite{NBOOK})
 \begin{equation}\label{eq:vanishing}
 h^1(\widetilde{X},\cL)=0\ \ \mbox{unless $\cL=\calo_{\widetilde{X}}$, when
  $ h^1(\widetilde{X},\calo_{\widetilde{X}})=1$.}
\end{equation}
 In particular, in all our relevant cases in the computation of
  the $\mathfrak{p}(l')$ in
 (\ref{eq:CUSPP}) for $l'\in \calS'$, the expression from the right hand side
 is independent on the choice of  $\calO_{\tX}(-l')\in {\rm Pic}(\tX)$,
 basically it depends (topologically/combinatorially) only on $l'$.

 The second definition identifies precisely the bundles  $\calO_{\tX}(-l')\in {\rm Pic}(\tX)$, and even an $H$--covering, which replaces the universal abelian covering.
Once the covering is fixed, the analogues of the
 natural line bundles are defined via an eigensheaf decomposition as in
  (\ref{eq:01}) (and the line after it), and all
 the analytic filtrations $\{\cF(l')\}_{l'}$ can be defined as in
 (\ref{eq:03}),  and all the basic statements of that subsection can be reproved.
 Again, here in the case of cusps, such a natural covering exists, it is called the
 `discriminant covering' of the cusp. The point is that any two splittings
 $H_1(M,\Z)\to H$ can be identified by an automorphism of $\pi_1(M)$ \cite{WallcuspsII}.
See also \cite{NWnew2} for the definition of this covering and several other properties of it. Summed up, in the special case of cusps, any splitting gives basically the same
covering, on which we can rely.


\bekezdes
As we already said, in the case of cusps,
the cohomology of line bundles are topological (in the sense of (\ref{eq:vanishing})),
hence $\cP(\bt)$
also should be topological. Let us rewrite its coefficients in terms of $\chi$.
Let us fix some $l'\in \calS'$ and
write $\overline{I}:=\calv\setminus I$. Then from the exact sequence
\begin{equation}\label{eq:ESbar}
0\to \calO_{\tX}(-l'-E)\to \calO_{\tX}(-l'-E_I)\to \calO_{E_{\overline{I}}}(-l'-E_I)\to 0,
\end{equation}
and from the vanishing $h^1(\calO_{\tX}(-l' -E))=0 $ (since $Z_K=E$ and
$l'\in \calS'$, hence Grauert--Riemenschneider vanishing holds), we get that $\pp(l')=\sum_I
(-1)^{|I|} h^0(  \calO_{E_{\overline{I}}}(-l'-E_I))$. Let $J:=J(l',I)$ be as in Lemma
\ref{DL}. Then $h^0(\tX, \calO_{\tX}(-l'-E_I))=h^0(\tX, \calO_{\tX}(-l'-E_{J(l',I}))$, hence again by (\ref{eq:ESbar}) we get that one also has
 \begin{equation*}
 \pp(l')=\sum_I (-1)^{|I|} h^0(  \calO_{E_{\overline{J(l',I)}}}(-l'-E_{J(l',I)}))=
 \sum_I (-1)^{|I|} h^0(  \calO_{E_{\overline{J}}}(-l'-E_{J})).\end{equation*}
We will separate the case $l'=0$.
Note that  $\pp(0)=1$ for {\it any} singularity. Here this can be seen as follows.
For $I=\emptyset $ one has
$\overline{J}=\calv$ hence  $h^0(\calO_{E_{\overline{J}}}(-E_J))=1$. But for $I\not=\emptyset$,
$h^0(\calO_{E_{\overline{J}}}(-E_J))=0$ (since $J=\calv$). Hence $\pp(0)=1$.

If $l'\not=0$ then $h^1(\calO_{E_{\overline{J}}}(-l'-E_J))=0$. This for $I=\emptyset$
reads as $h^1(\calO_{E}(-l'))=0$. But by  Grauert--Riemenschneider vanishing
$h^1(\calO_{E}(-l'))=h^1(\calO_{\widetilde{X}}(-l'))$, which vanishes by (\ref{eq:vanishing}).

For $I\not=\emptyset$ it follows from the fact that each component of $\overline{J}$ is a string and all the Chern degrees are $\geq 0$. In particular,
$h^0(\calO_{E_{\overline{J}}}(-l'-E_J))=\chi(\calO_{E_{\overline{J}}}(-l'-E_J))=
\chi(l'+E)-\chi(l'+E_J)$. Hence
 \begin{equation}\label{eq:cusp*}\cP(\bt)=\bt^0+\sum_{l'\in \calS'\setminus \{0\}}
 \sum_I (-1)^{|I|+1} \chi(l'+E_{J(l',I)})\cdot \bt^{l'}.\end{equation}
 This can  be compared with the expression from
 (\ref{eq:cusp**}), valid in the rational homology sphere link case for certain minimally elliptic singularities (with rational homology sphere link).

 \subsection{The series $ \cZ(\bt)$ for cusps} \label{def:Zcusp}
 The topological linear subspace arrangement can be defined analogously
  as suggested by  (\ref{cor:tarrZ}). Namely, one sets
 $T(l'):=H^0(\calO_E(-l'))$ and $T_v(l'):= H^0(\calO_{E-E_v}(-l'-E_v))$ for each $v\in\calv$. Then one defines $$\cZ(\bt)=\sum_{l'\in \cS'}\, \chi_{{\rm top}}(\, \bP(T(l')\setminus \cup_{v}T_v(l'))\,)
\cdot \bt^{l'}.$$
Since we use the reduced structure of $E$, one can argue (similarly as in the
rational homology sphere case) that $\cZ(\bt)$ depends only on the combinatorics of the graph.
(This will follow from the next lemma and its proof as well.)

\begin{lemma} $\cZ(\bt)=\cP(\bt)$, or,
\begin{equation}\label{eq:Zcusp}
\cZ(\bt)=\bt^0+\sum_{l'\in \calS'\setminus \{0\}}
 \sum_I (-1)^{|I|+1} \chi(l'+E_{J(l',I)})\cdot \bt^{l'}.\end{equation}
\end{lemma}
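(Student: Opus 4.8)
The plan is to establish $\cZ(\bt)=\cP(\bt)$ coefficientwise. Both series are supported on $\calS'$, so it suffices to show, for each $l'\in\calS'$, that the coefficient of $\bt^{l'}$ in $\cZ(\bt)$, namely $\chi_{{\rm top}}(\bP(T(l')\setminus\cup_v T_v(l')))$, equals $\pp(l')$. Since subsection \ref{ss:PCusps} already rewrote $\pp(l')=\sum_{I\subset\cV}(-1)^{|I|}h^0(\calO_{E_{\overline{I}}}(-l'-E_I))$ (with $E_{\overline{I}}=E-E_I$), the whole problem reduces to producing the very same alternating sum from the arrangement side. Once this is done, the explicit expression (\ref{eq:Zcusp}) is literally (\ref{eq:cusp*}), and the fact that $\cZ(\bt)$ is combinatorial follows because that formula depends on $\Gamma$ only through $\chi$ and $J(l',I)$.

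The first step is the identification $\cap_{v\in I}T_v(l')=H^0(\calO_{E_{\overline{I}}}(-l'-E_I))$ for every $I\subset\cV$. By definition $T_v(l')=\ker(H^0(\calO_E(-l'))\to H^0(\calO_{E_v}(-l')))$, so $\cap_{v\in I}T_v(l')$ is the kernel of the joint restriction $H^0(\calO_E(-l'))\to\bigoplus_{v\in I}H^0(\calO_{E_v}(-l'))$. On the other hand, the exact sequence $0\to\calO_{E_{\overline{I}}}(-l'-E_I)\to\calO_E(-l')\to\calO_{E_I}(-l')\to 0$ identifies $H^0(\calO_{E_{\overline{I}}}(-l'-E_I))$ with the kernel of the single restriction to $E_I$. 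These two kernels coincide precisely because the component-restriction map $H^0(\calO_{E_I}(-l'))\to\bigoplus_{v\in I}H^0(\calO_{E_v}(-l'))$ is injective, and this is exactly where the reduced structure of $E$ is used: for reduced $E_I$ the sheaf map $\calO_{E_I}\hookrightarrow\bigoplus_{v\in I}\calO_{E_v}$ is injective (at a node, $\C[x,y]/(xy)\hookrightarrow\C[x]\times\C[y]$), and tensoring with the locally free sheaf $\calO_{\tX}(-l')$ and passing to global sections preserves injectivity.

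With this identification the second step is immediate. Applying Lemma \ref{lem:Arrang} to the finite family $\{T_v(l')\}_{v\in\cV}$ of linear subspaces of $T(l')$ gives
$$\chi_{{\rm top}}(\bP(T(l')\setminus\cup_v T_v(l')))=\sum_{I\subset\cV}(-1)^{|I|}\dim\cap_{v\in I}T_v(l')=\sum_{I\subset\cV}(-1)^{|I|}h^0(\calO_{E_{\overline{I}}}(-l'-E_I)),$$
which is exactly the sum computed for $\pp(l')$ in subsection \ref{ss:PCusps} out of (\ref{eq:CUSPP}), the sequence (\ref{eq:ESbar}) and the vanishing $h^1(\calO_{\tX}(-l'-E))=0$. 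Hence $\cZ(\bt)=\cP(\bt)$, and (\ref{eq:Zcusp}) is then (\ref{eq:cusp*}).

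The only genuine obstacle I foresee is the identification of the first step, i.e. proving $\cap_{v\in I}T_v(l')=H^0(\calO_{E_{\overline{I}}}(-l'-E_I))$ together with the injectivity of the component-restriction map; this is the single point where the cycle geometry of the cusp (nodes, $p_a(E)=1$) really differs from the tree situation and cannot simply be quoted from Proposition \ref{prop:tarr}. Everything else is the assembly of already-available pieces: the general subspace-arrangement formula of Lemma \ref{lem:Arrang} and the $\cP$-side computation of subsection \ref{ss:PCusps}, which has already absorbed the special cusp vanishing (\ref{eq:vanishing}) required to pass from $h^0$ to $\chi$ in the closed formula. A more self-contained alternative would avoid referring to the $\cP$ computation and instead mirror Proposition \ref{prop:tarr}(b) directly, proving $\dim\cap_{v\in I}T_v(l')=\chi(l'+E)-\chi(l'+E_{J(l',I)})$ via the string structure of $E_{\overline{J(l',I)}}$ and the vanishing $h^1(\calO_{E_{\overline{J(l',I)}}}(-l'-E_{J(l',I)}))=0$, and then invoking the analogue of Corollary \ref{cor:tarreu}; but the coefficientwise comparison with $\pp(l')$ is the shorter route.
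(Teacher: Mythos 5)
Your proof is correct. The one step that is genuinely new relative to the paper's prerequisites --- the identification $\cap_{v\in I}T_v(l')=H^0(\calO_{E_{\overline{I}}}(-l'-E_I))$, proved via the injectivity of $\calO_{E_I}\hookrightarrow \bigoplus_{v\in I}\calO_{E_v}$ on the reduced curve --- is sound, and combined with Lemma \ref{lem:Arrang} and the formula $\pp(l')=\sum_I(-1)^{|I|}h^0(\calO_{E_{\overline{I}}}(-l'-E_I))$ already derived in \ref{ss:PCusps}, it gives the coefficientwise equality $\cZ(\bt)=\cP(\bt)$, after which (\ref{eq:Zcusp}) is literally (\ref{eq:cusp*}).

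Your route is, however, organized differently from the paper's. The paper sketches two arguments: in the first, the vanishing $h^1(\calO_{\tX}(-l'-E))=0$ is used to conclude that the restriction $H^0(\calO_{\tX}(-l'))\to H^0(\calO_E(-l'))$ is \emph{onto}, so the analytic arrangement $\{A_v(l')\subset A(l')\}$ coincides with the topological one $\{T_v(l')\subset T(l')\}$ as arrangements, and then the cusp analogue of Corollary \ref{cor:PArr} yields $\cP=\cZ$ at once; in the second, parts (a)--(e) of Proposition \ref{prop:tarr} are re-verified for the cycle graph and Corollary \ref{cor:tarreu} is imitated. Your argument sits between these: like the second route you work at the curve level with the intersections $\cap_{v\in I}T_v(l')$, but you bypass the $J(l',I)$-saturation machinery entirely (since you compare dimensions directly against the $\cP$-side sum, you never need the conversion $h^0=\chi$, so the raw index set $I$ suffices --- a genuine simplification), and like the first route you then quote the computation of \ref{ss:PCusps} rather than redoing it. The trade-off is that the paper's first argument proves the stronger statement that the two arrangements are \emph{equal}, which is exactly what the paper recycles later to obtain the motivic identity $\cP(\bL,\bt)=\cZ(\bL,\bt)$; your Euler-characteristic matching by itself does not give that, although your intersection-wise dimension identity, combined with (\ref{lem:arr}) and the computation in \ref{ss:PCusps}, would recover it through the Grothendieck-class inclusion--exclusion formula of \ref{miGroth}, since those classes depend only on the dimensions $\dim V_I$.
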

\begin{proof}
One can proceed in two different ways. The first version is that one notice that
$H^0(\calO_{\tX}(-l'))\to H^0(\calO_{E}(-l'))$ is onto (since $h^1(\calO_{\tX}(-l'-E))=0$).
Hence, analytic and the topological linear subspace arrangements are the same.
The analytic one can be connected with $\cP(\bt)$ similarly as in Corollary \ref{cor:PArr},
hence $\cP=\cZ$ follows.

In a different way, one can analyse the validity of Proposition \ref{prop:tarr} as well.
One sees that for $l'\in\calS'\setminus \{0\}$ {\it (a)-(b)-(c)} are valid. {\it (d)}
 should be modified as follows: $\dim T(l')=h^0(\calO_E(-l'))=\chi(\calO_E(-l'))=-(E,l')$.
 Moreover,   {\it (e)}
 remain valid  as well. In particular, as in \ref{cor:tarreu} one proves
(\ref{eq:Zcusp}).
\end{proof}

\subsection{The computation of $\cP(\bt)=\cZ(\bt)$}
Assume first that $|\calv|\geq 3$.

(a) Assume that $l'=E_v^*$ for some $v$. Then $J=\emptyset $ for $I=\emptyset$
and $J=\calv$ otherwise. Hence, by (\ref{eq:cusp*}) $\pp(l')=1$. Similarly, if
$l'=kE^*_v$ for some $v\in\calv$ and $k\geq 2$ then
 $J=\emptyset $ for $I=\emptyset$,
and $J=\calv$ if $v\in I$, and $J=\calv\setminus v$ if $I\not=\emptyset$ and $v\not\in I$.
Hence, again by  (\ref{eq:cusp*}) $\pp(l')=1$.

(b) Assume that $v$ and $v'$ are adjacent vertices,  and $l'=kE^*_v+k'E^*_{v'}$ with
$k, k'>0$. Let $w\not= v'$ adjacent vertex  with $v$.
 Let $\cP_v:=\{I\subset \calv, v\in I\}$. Then its elements can be put in pairs
$(I,I\cup w) $ with $w\not\in I$, and with $J(l', I)=J(l', I\cup w)$. Hence the contribution in (\ref{eq:cusp*}) corresponding to the sum over $\cP_v$ is zero.
The same is true, by similar argument,  for the subset $\cP_{v,v'}:=\{I\subset \calv, v\not\in I, v'\in I\}$. Hence we remain with subsets $I$ with $I\cap \{v,v'\}=\emptyset$.
For them $J=\emptyset$ if $I=\emptyset$ (with contribution $\chi(l')$), and $J=\calv\setminus \{v,v'\}$ (with contribution $\chi(l')+1$) else. Hence $\pp(l')=1$ again.

(c) We claim that $\pp(l')=0$ in all other cases. Write $l'$ as $\sum _{v\in S}
a_vE^*_v$ with all $a_v>0$. Assume that the $E^*$ support $S$ of $l'$ is not $\calv$.
Fix a maximal connected string $\Gamma_1$ in $\calv\setminus S$. Let
$v, v'\in S$ be the two adjacent vertices of $\Gamma_1$. Then, by excluding the
cases already discussed, we know that $v\not=v'$ and $\Gamma_1\cup\{v,v'\}\not=\calv$.
Then we compute $\pp(l')$ via (\ref{eq:cusp*}).
Similarly as in case (b), the contribution in the sum corresponding to the subset
$I$ with $I\cap\{v,v'\}\not=\emptyset$ is zero (choose the adjacent $w$ in $\Gamma_1$).
Hence,  in the sequel we consider  sets $I$ with $I\cap \{v,v'\}=\emptyset$.
 For each such $I$, note that $\tilde{J}:=
J\setminus \{v,v'\}$ has the property that $\chi(l'+E_J)=\chi(l'+E_{\tilde{J}})$
($\dag$).
Indeed, if at some step of the algorithm from Lemma \ref{DL}(\ref{DL2}) we have $I_m=\tilde{J}$, and by the algorithm we have to add $v$ (or $v'$), then $(E_v, l'+E_{I_m})=1$, hence ($\dag$) holds. Write $I$ as $I_1\cup I_2$ with $I_1\subset \Gamma_1$, while $I_2\cap \Gamma_1=\emptyset$. Then $\tilde{J}$ also decomposes as $J_1\cup J_2$
with similar properties. Note also that $(l'+E_{J_2},E_{J_1})=0$ Hence
$\sum _{I_1\cup I_2}(-1)^{|I_1|+|I_2|}\chi(l'+E_{\tilde{J}})=
 \sum _{I_1\cup I_2}(-1)^{|I_1|+|I_2|}(\chi(l'+E_{J_2})+\chi(E_{J_1}))=
  \sum _{I_2}(-1)^{|I_2|}\chi(l'+E_{J_2})\sum_{I_1}(-1)^{|I_1|}+
   \sum _{I_1}(-1)^{|I_1|}\chi(E_{J_1})\sum_{I_2}(-1)^{|I_2|}=0$.

  Finally assume that $S=\calv$. Similarly as in ($\dag$) above, we have
  $\chi(l'+E_J)=\chi(l'+E_I)$, which equals $\chi(l'+E)-\chi(\calO_{E_{\bar{I}}}(-l'-E_I))$.
  (This identity can be proved via $h^1(\calO_{E_{\bar{I}}}(-l'-E_I))=0$ as well.)
  Now the vanishing $\sum_I  (-1)^{|I|} \chi(\calO_{E_{\bar{I}}}(-l'-E_I))=
  \sum_I  (-1)^{|I|}( \chi(E_{\bar{I}})-(E_{\bar{I}}, l'+E_I))=
  0$  follows from  combinatorial arguments.

\vspace{2mm}

 Hence we proved that whenever $|\calv|\geq 3$ then the following holds.

\begin{theorem} \label{bek:formulacusp}
$$\cP(\bt)=\cZ(\bt)=1+\sum_{v\in\calv} \frac{\bt^{E^*_v}}{1-\bt^{E^*_v}}+\sum _{(u,v)\in\cale}
\frac{\bt^{E^*_v}}{1-\bt^{E^*_v}}\cdot \frac{\bt^{E^*_u}}{1-\bt^{E^*_u}}.$$
\end{theorem}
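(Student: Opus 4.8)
The plan is to combine the preparatory computations (a)--(c) already carried out in this subsection into a single closed-form identity, reading off the nonzero coefficients $\pp(l')$ and summing the resulting formal series. The key observation is that cases (a)--(c) have completely classified which $l'\in\calS'$ contribute to $\cP(\bt)=\cZ(\bt)$ via formula (\ref{eq:cusp*}): the coefficient $\pp(l')$ equals $1$ precisely when the $E^*$-support $S$ of $l'$ is either a single vertex $\{v\}$ (case (a)) or a pair of adjacent vertices $\{v,v'\}$ with $(u,v)\in\cale$ (case (b)), and $\pp(l')=0$ in every remaining case (case (c)). Together with $\pp(0)=1$, this is exactly the support data encoded in the asserted product-of-geometric-series expression.

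\smallskip

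First I would fix the indexing convention: every $l'\in\calS'$ is written uniquely as $l'=\sum_{v\in\calv}a_vE^*_v$ with $a_v\in\setZ_{\geq 0}$, and its $E^*$-support is $S:=\{v:a_v>0\}$. I would then partition the sum $\cP(\bt)=\sum_{l'\in\calS'}\pp(l')\bt^{l'}$ according to the cardinality and shape of $S$. From case (a), the $l'$ with $S=\{v\}$ are exactly $l'=kE^*_v$ for $k\geq 1$, each with $\pp(l')=1$; these contribute $\sum_{v\in\calv}\sum_{k\geq 1}\bt^{kE^*_v}=\sum_{v\in\calv}\frac{\bt^{E^*_v}}{1-\bt^{E^*_v}}$. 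From case (b), the $l'$ with $S=\{u,v\}$ for an edge $(u,v)\in\cale$ are exactly $l'=kE^*_v+k'E^*_u$ with $k,k'\geq 1$, each with $\pp(l')=1$; summing over $k,k'\geq 1$ gives the double geometric series $\frac{\bt^{E^*_v}}{1-\bt^{E^*_v}}\cdot\frac{\bt^{E^*_u}}{1-\bt^{E^*_u}}$ for each edge. All other supports contribute $0$ by case (c). Adding the $\bt^0$ term (from $\pp(0)=1$) then assembles exactly the claimed formula.

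\smallskip

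The main obstacle is not in the summation, which is routine, but in verifying that the case analysis (a)--(c) genuinely exhausts all $l'\in\calS'$ under the standing hypothesis $|\calv|\geq 3$, and in justifying the final combinatorial vanishing asserted at the end of case (c) for the full-support case $S=\calv$. Here I would record the reduction $\pp(l')=\sum_I(-1)^{|I|}\chi(\calO_{E_{\bar I}}(-l'-E_I))$, expand $\chi(\calO_{E_{\bar I}}(-l'-E_I))=\chi(E_{\bar I})-(E_{\bar I},l'+E_I)$ via Riemann--Roch on the (string) components of $E_{\bar I}$, and check that, because the dual graph of a cusp is a single loop with every $a_v>0$, the alternating sum $\sum_I(-1)^{|I|}$ kills the constant Euler-characteristic terms while the bilinear terms telescope against the loop structure. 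One must confirm that the pairing argument of cases (b)--(c) — matching $I$ with $I\cup\{w\}$ for an adjacent $w$ lying in the relevant string — is valid here: this requires $|\calv|\geq 3$ so that such an auxiliary vertex $w$ always exists and is distinct from the boundary vertices $v,v'$, which is precisely why the two-vertex case must be excised into Theorem \ref{bek:formulacusp}'s companion statement.

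\smallskip

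Finally I would confirm that the identity $\cP(\bt)=\cZ(\bt)$ needs no separate argument at this stage: it has already been established in the Lemma preceding (\ref{eq:Zcusp}), so the theorem is simply the explicit evaluation of this common series. I would therefore present the proof as the bookkeeping that converts the coefficient classification into the stated generating-function form, citing (\ref{eq:cusp*}) for the coefficient values and the earlier Lemma for the equality of the two series.
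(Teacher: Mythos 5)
Your proposal is correct and takes essentially the same route as the paper: the paper's own proof of Theorem \ref{bek:formulacusp} is exactly the case analysis (a)--(c) preceding it (classifying $\pp(l')=1$ for $l'=0$, for $E^*$-support a single vertex, and for support an adjacent pair, with $\pp(l')=0$ otherwise, via (\ref{eq:cusp*})), followed by the — there left implicit — summation of the geometric series, together with $\cP(\bt)=\cZ(\bt)$ from the preceding lemma. Your write-up makes the bookkeeping and the role of the hypothesis $|\calv|\geq 3$ in the pairing argument explicit, but introduces no new idea beyond the paper's argument.
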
\noindent
By direct verification, the same formula holds for $\calv=\{v,u\}$ as well. Namely
$$\cP(\bt)=\cZ(\bt)=1+ \frac{\bt^{E^*_v}}{1-\bt^{E^*_v}}+
\frac{\bt^{E^*_u}}{1-\bt^{E^*_u}}+ 2\cdot
\frac{\bt^{E^*_v}}{1-\bt^{E^*_v}}\cdot \frac{\bt^{E^*_u}}{1-\bt^{E^*_u}}.$$
If $\calv=\{v\}$ then
$$\cP(t)=\cZ(t)=1+\bt^{E^*}/(1-\bt^{E^*})^2.$$

\subsection{ The motivic series associated with $\{{\rm ECa}^{l'}(E)\}_{l'}$ (general $(X,o)$)} \label{bek:MOTIVE}

Let us assume that $\tX$ is a good resolution of a normal surface singularity $(X,o)$
such that each $E_v$ is rational, however, we allow cycles in the graph
(hence in this subsection $(X,o)$ is not necessarily a cusp).
For any $l'\in\calS' $ let ${\rm ECa}^{-l'}(E)$ be the space of effective Cartier divisors of $E$, cf. \ref{bek:Eca} or \cite{NNI}.
It is a quasiprojective   variety, cf. \cite{Groth62,Kl,Kleiman2013,NNI}, non--empty if and only if $l'\in\calS'$. (See also \ref{bek:Eca}.)
Let us define the generating function of their classes in the Grothendieck group
\begin{equation}\label{eq:GrothGen}
\cZ^{{\rm ECa}}(\bL,\bt):= \sum_{l'\in \calS'}[{\rm ECa }^{-l'}(E)]\cdot \bt^{l'}.
\end{equation}

\begin{theorem}\label{th:GrothNEW} With the above notations
$$\cZ^{{\rm ECa}}(\bL,\bt)=\frac{
\prod_{(u,v)\in\cE}\ (1-\bt^{E_u^*}-\bt^{E_v^*}+\bL \bt^{E_u^*+E_v^*})}
{\prod_{v\in\cV} \ (1-\bt^{E^*_v})(1-\bL\bt^{E^*_v})}.$$
\end{theorem}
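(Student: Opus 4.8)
The plan is to compute each class $[{\rm ECa}^{-l'}(E)]$ geometrically, by decomposing an effective Cartier divisor on the reduced curve $E$ according to where its support sits, and then to recognize the resulting generating function as a product of local factors. First I would set up the degree dictionary: if $l'=\sum_v a_vE_v^*$ then, since $(E_v^*,E_w)=-\delta_{vw}$, the bundle $\calO_E(-l')$ has degree $a_w$ on each $E_w$; hence for $l'\in\cS'$ (equivalently all $a_v\ge 0$) the space ${\rm ECa}^{-l'}(E)$ parametrizes effective Cartier divisors $D$ with $\deg_{E_v}D=a_v$. Writing $x_v:=\bt^{E_v^*}$, so that $\bt^{l'}=\prod_vx_v^{a_v}$, the target is $\cZ^{{\rm ECa}}(\bL,\bt)=\sum_{(a_v)_v\ge0}[{\rm ECa}^{-l'}(E)]\prod_vx_v^{a_v}$.

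The structural step is a stratification of ${\rm ECa}^{-l'}(E)$ by the local behaviour of $D$ at the nodes. Since $E$ is smooth away from the nodes, the part of $D$ on the smooth locus $E_v^\circ$ is just an effective $0$-cycle, so a degree-$m_v$ interior part on $E_v$ is a point of $S^{m_v}(E_v^\circ)$. At a node $p_e$ attached to an edge $e=(u,v)$ the local ring is $\bC\{x,y\}/(xy)$, and a local effective Cartier divisor (a principal ideal generated by a non--zero--divisor) either misses $p_e$ or has a bidegree $(\alpha,\beta)$ with $\alpha,\beta\ge1$ along the two branches. Invoking the description of the Abel space in \cite{NNI,NNII,NNIII}, every $D$ splits uniquely into these independent local parts, so ${\rm ECa}^{-l'}(E)$ is the disjoint union, over all admissible distributions $a_v=m_v+\sum_{e\ni v}(\text{$E_v$--bidegree of $D$ at }e)$, of the products $\prod_vS^{m_v}(E_v^\circ)\times\prod_{e\ \mathrm{active}}M_e(\alpha_e,\beta_e)$, where $M_e(\alpha,\beta)$ is the moduli of local node divisors of bidegree $(\alpha,\beta)$. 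By the scissor relations this turns $[{\rm ECa}^{-l'}(E)]$ into the corresponding sum of products of classes.

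The key local computation is the class of $M_e(\alpha,\beta)$ for $\alpha,\beta\ge1$. For a non--zero--divisor $f$ with branch orders $\alpha,\beta$ one checks that $(f)=\bC f+x^{\alpha+1}\bC\{x\}+y^{\beta+1}\bC\{y\}$, so the ideal depends only on the leading line $\bC(cx^{\alpha}+dy^{\beta})$ with $c,d\neq0$; hence $M_e(\alpha,\beta)\cong\bP^1\setminus\{0,\infty\}$ and $[M_e(\alpha,\beta)]=\bL-1$ for every $(\alpha,\beta)$. Because the distribution constraint is additive in the exponents, the generating series then factorizes over vertices and edges. The vertex factor is the motivic Macdonald (Kapranov zeta) series $\sum_{m\ge0}[S^m(E_v^\circ)]x_v^m=Z_{E_v^\circ}(x_v)$; since $E_v^\circ$ is $\bP^1$ with $\kappa_v$ points deleted, multiplicativity together with $Z_{\bA^1}=1/(1-\bL x)$ and $Z_{\mathrm{pt}}=1/(1-x)$ gives $(1-x_v)^{\kappa_v-1}/(1-\bL x_v)$. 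The edge factor is
\[
1+\sum_{\alpha,\beta\ge1}(\bL-1)\,x_u^{\alpha}x_v^{\beta}=\frac{1-x_u-x_v+\bL x_ux_v}{(1-x_u)(1-x_v)}.
\]
Multiplying all factors and using $\prod_{(u,v)\in\cE}(1-x_u)(1-x_v)=\prod_v(1-x_v)^{\kappa_v}$, the powers of $(1-x_v)$ cancel and leave exactly the asserted right--hand side.

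The hard part will be the structural step, i.e. rigorously justifying that ${\rm ECa}^{-l'}(E)$ decomposes as the stated disjoint union of products of varieties: that each divisor factors uniquely into independent local contributions at the smooth points and the nodes, that each stratum carries the product variety structure (and in particular that the node stratum is the variety $M_e(\alpha,\beta)$ above), and that passing to Grothendieck classes is legitimate. This is exactly where the fine geometry of the Abel maps from \cite{NNI,NNII,NNIII} is required; once it is in place, the node computation and the zeta--function bookkeeping are routine, and they work verbatim in the presence of cycles in $\Gamma$, since only the local data at nodes and the valencies $\kappa_v$ enter.
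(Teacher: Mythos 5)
Your proposal is correct and takes essentially the same route as the paper's own proof: the paper also stratifies ${\rm ECa}^{-l'}(E)$ by the supports of the divisors (smooth-locus parts versus node parts), computes each node stratum's class as $\bL-1$, obtains the vertex factor $(1-x_v)^{\kappa_v-1}/(1-\bL x_v)$ by multiplicativity of the motivic zeta function, and assembles the same edge factors with the identical cancellation of the $(1-x_v)$ powers. The structural step you flag as the hard part (unique local decomposition and the product structure of the strata) is treated in the paper at the same level of rigor, namely asserted with reference to \cite{NNI}.
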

\begin{proof}
Write $l'=\sum_va_vE^*_v$ with $a_v\in\Z_{\geq 0}$. Recall that effective Cartier divisors are local nonzero
sections of the sheaf $\calO_E$
 up to local invertible elements of $\calO_E^*$. At any intersection point $p\in E_v\cap E_u$,
 $(v,u)=e\in \cE$, with local coordinates $(x,y)\in U$,
 $\{x=0\}=E_v\cap U$, $\{y=0\}=E_u\cap U$, an effective  Cartier divisor supported at $p$ has the form
 $D_{k_v,k_u}=c_v^ex^{k^e_v}+c_u^ey^{k^e_u}$ (in $\C\{x,y\}/(xy)$, up to an invertible element),
 where $c^e_v, \,c^e_u\in\C^*$. It is nonempty if and only if $k_v^e\geq 1$
 and $k_u^e\geq 1$.
 ${\rm ECa}^{-l'}(E)$ has a natural stratification according to
 the support of the divisors.
 The sum of the degrees of the divisors with support on $E_v$ should be $a_v$, out of this, say $a^\circ _v$,
 is provided by those supported on $E^\circ _v$, and the others by divisors supported on intersection points of type
 $E_v\cap E_u$. Such a divisor contribute in the degree  with $k^e_u$. Hence
 $a_v=a_v^\circ +\sum_{(v,u)=e\in\cE} k^e_u$ ($\dag_v$).  Hence, with fixed $\{a_v\}_v$, we consider the stratification of ${\rm ECa}^{l'}(E)$ according to the system $S(l'):=\{ \{a^\circ_v\}_v, (k_v^e,k_u^e)_{(v,u)\in\cE}\}$
 satisfying ($\dag_v$) for any $v$ and $k_u^e, k_v^e\geq 1$ for any  such
 edge $e$ which has a contribution.
 For $s=\{ \{a^\circ_v\}_v, (k_v^e,k_u^e)_{(v,u)\in\cE_s}\}\in S(l')$
 (where $\cE_s$  is the set of edges which contributes in this  stratum)
 the  corresponding stratum,
 ${\rm ECa}^{l'}_s (E)$,  satisfies  $$[{\rm ECa}^{l'}_s (E)]=(\bL-1)^{|\cE_s|}\cdot \prod_v \,[S^{a_v^\circ}E^\circ _v].$$

 First we compute the class $[S^{a_v^\circ}E^\circ _v]$ for any $v$. Let $E^c_v$ be a set consisting of $\kappa_v-1$
 distinct point, and $x$ a formal variable. Then $\sum_{i\geq 0} x^i[S^iE^\circ _v]\cdot
 \sum_{i\geq 0} x^i[S^iE^c _v]=\sum_{i\geq 0} x^i[S^i\C]=\sum _{i\geq 0} x^i\bL^i$. Hence
 $$\sum_{i\geq 0} x^i[S^iE^\circ _v]= (1-x)^{\kappa_v-1} / (1-\bL x).$$
 Next, let $S$ be the set of all the systems when we vary $l'$, that is,
 $S=\cup_{l'\in \calS'}S(l')$. Then, in $S$, $\{a_v\}_v$ is not fixed anymore,
 and the integers $a_v^\circ\geq 0$, $k_v^e,k_u^e\geq 1$ run independently.
Therefore, with the usual substitution $x_v=\bt^{E^*_v}$,
\begin{equation*}\begin{split}
Z^{{\rm ECa}}(\bL,\bx)&=\sum _{l'}\sum _{s\in S(l') } \big( \,
(\bL-1)^{|\cE_s|} \cdot \prod_v [S^{a^\circ_v}E^\circ_v]\cdot \prod_v x_v^{a^\circ _v+\sum
_{(v,u)=e\in \cale_s}k_u^e}\,\big)\\ &=
\sum_{s\in S} \big( \,
 \prod_v x_v^{a^\circ_v} [S^{a^\circ_v}E^\circ_v]\cdot
 (\bL-1)^{|\cE_s|} \cdot
 \prod_v x_v^{\sum _{(v,u)=e\in \cale_s}k_u^e}\,\big)\\
&=\prod_v \frac{(1-x_v)^{\kappa_v-1}}{1-\bL x_v}\cdot \sum_{\cale'\subset \cale}\ \
\prod _{(v,u)=e\in \cale'} (\bL-1)\cdot
\frac{x_v}{1-x_v}\cdot \frac{x_u}{1-x_u}\\
&=\prod_v \frac{(1-x_v)^{\kappa_v-1}}{1-\bL x_v}\cdot\ \
\prod _{(v,u)=e\in \cale}\big(\  1+ (\bL-1)\cdot
\frac{x_v}{1-x_v}\cdot \frac{x_u}{1-x_u}\ \big)\\
&=\prod_v \frac{(1-x_v)^{\kappa_v-1}}{1-\bL x_v}\cdot\ \
\prod _{(v,u)=e\in \cale} \, \frac{1-x_v-x_u+\bL x_vx_u}{
(1-x_v) (1-x_u)},
\end{split}\end{equation*}
which is equivalent with the needed expression.
\end{proof}

\begin{example}\label{ex:recover}
If the graph is a tree then ${\rm ECa}^{-l'}(E)=\bP(T(l')\setminus \cup_v T_v(l'))$  and
$\cZ^{{\rm ECa}}(\bL,\bt)=\cZ(\bL,\bt)$, hence we recover Theorem \ref{th:Groth}.
\end{example}
\begin{remark}
Since $\cZ_K-E=\sum_v(\kappa_v-2)E^*_v$ we have $\bt^{Z_K-E}\cdot \cZ(\frac{1}{\bt})=\cZ(\bt)$.
Similarly,  the motivic expression satisfies the functional equation
$$ \bL^{c(\Gamma)-1}\cdot \bt^{Z_K-E}\cdot \cZ(\bL,\bt)\big|_{t_v\mapsto (\bL t_v)^{-1}}=
\cZ(\bL, \bt).$$
\end{remark}

\subsection{The motivic $\cP(\bL,\bt)$ and $\cZ(\bL,\bt)$ for a cusp singularity}

Let us return to a cusp singularity $(X,o)$. Recall that  the vanishing
$h^1(\calO_{\tX}(-l'-E))=0$ guarantees  that the analytic and topological
arrangements are the
same. In particular, $\cP(\bL,\bt)=\cZ(\bL,\bt)$.
On the other hand,
for a cusp
$\cZ^{{\rm ECa}}(\bL=1,\bt)=\prod_v(1-\bt^{E^*_v})^{\kappa_v-2}\equiv 1$. However, this
`1' is not the right substitute for $Z(\bt)$!

Indeed, for any $l'\in \calS'$ we  have to consider the Abel map
$c^{-l'}:{\rm ECa}^{-l'}(E)\to {\rm Pic}^{-l'}(E)$, and for any/certain
$\cL\in {\rm Pic} ^{-l'}(E)$  the class
$[\bP(H^0(E,\cL)_{{\rm reg}})]= [ (c^{-l'})^{-1}(\cL)]$ instead of the whole
$[{\rm ECa}^{-l'}(E)]$. Let us denote this dependence
 by $\cL=\calO_E(-l')$   (though it can be
an arbitrary choice when $l'\not=0$ and $\calO_E(-l')=\calO_E$ for $l'=0$).
Hence, we define
$$\cZ(\bL,\bt):= \sum_{l'\in \calS'}\,
[\bP(H^0(E,\calO_E(-l'))_{{\rm reg}})]\cdot \bt^{l'}
=\sum_{l'\in \calS'}\,
[\bP(T(l')\setminus \cup_v T_v(l'))]\cdot \bt^{l'}.$$
The second identity follows similarly as in (\ref{eq:ECA}).

The substitution  $\bL=1$ replaces the element
$[\bP(H^0(E,\calO_E(-l'))_{{\rm reg}})]$ from the Grothendieck group
by $\chi_{top}( \bP(H^0(E,\calO_E(-l'))_{{\rm reg}}))$, hence
$\cZ(\bL=1,\bt)$ is exactly $\cZ(\bt)$ considered in \ref{def:Zcusp}.
\begin{theorem}\label{lem:cuspdiv}
$\cZ(\bL,0)=1$. Furthermore, for $l'\in\calS\setminus \{0\}$ one has
$$[\bP(H^0(E,\calO_E(-l'))_{{\rm reg}})]= [{\rm ECa}^{-l'}(E)] / (\bL-1).$$
\end{theorem}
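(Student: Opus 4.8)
The plan is to understand the Abel map $c^{-l'}\colon {\rm ECa}^{-l'}(E)\to {\rm Pic}^{-l'}(E)$ in the cusp case and show that its fibres are all isomorphic and that the base is particularly simple, so that the total space decomposes as a product (up to the class in the Grothendieck ring). For a cusp the exceptional cycle $E$ satisfies $h^1(\calO_E)=p_g=1$, so ${\rm Pic}^0(E)\cong \C/\Z\cong\C^*$ (as noted in \ref{ss:PrelCusps}), and hence ${\rm Pic}^{-l'}(E)\cong\C^*$ for every $l'$. This is the crucial structural difference from the rational homology sphere case, where ${\rm Pic}^{-l'}(E)$ was a single point and the formula (\ref{eq:ECA}) gave ${\rm ECa}^{-l'}(E)$ directly as the projectivised arrangement complement.

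\textbf{The case $l'=0$.} First I would dispose of $\cZ(\bL,0)=1$. By the convention of \ref{bek:Eca}, ${\rm ECa}^{0}(E)$ is the single point $\{\emptyset\}$ (the empty divisor), and correspondingly $H^0(E,\calO_E)_{{\rm reg}}=\C^*$ so $\bP(H^0(E,\calO_E)_{{\rm reg}})$ is a point with class $1$. This is just an unwinding of definitions.

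\textbf{The main case.} For $l'\in\calS'\setminus\{0\}$, the strategy is to show that the fibre of the Abel map over \emph{each} $\cL\in{\rm Pic}^{-l'}(E)$ is isomorphic to $\bP(H^0(E,\cL)_{{\rm reg}})$ with a class \emph{independent of} $\cL$, and that this fibre is exactly what enters $\cZ(\bL,\bt)$ (via the choice $\cL=\calO_E(-l')$). Indeed, by the description recalled from \cite{Kl} in \ref{bek:Eca}, the fibre $(c^{-l'})^{-1}(\cL)$ equals $H^0(E,\cL)_{{\rm reg}}/H^0(\calO_E^*)=\bP(H^0(E,\cL)_{{\rm reg}})$, since $H^0(\calO_E^*)=\C^*$. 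So the identity I must establish is the fibration-type relation
\[
[{\rm ECa}^{-l'}(E)]=[\,\bP(H^0(E,\cL)_{{\rm reg}})\,]\cdot[{\rm Pic}^{-l'}(E)]=[\,\bP(H^0(E,\calO_E(-l'))_{{\rm reg}})\,]\cdot(\bL-1),
\]
using $[{\rm Pic}^{-l'}(E)]=[\C^*]=\bL-1$. To justify replacing the total class by a product, I would argue that the Abel map $c^{-l'}$ is a Zariski-locally-trivial fibration (or at least piecewise trivial, which suffices in the Grothendieck ring) with fibre $\bP(H^0(E,\cL)_{{\rm reg}})$; the dimension count from \ref{bek:Eca}, $\dim {\rm ECa}^{-l'}(E)=-(l',E)$, together with $\dim{\rm Pic}^{-l'}(E)=1$, is consistent with a fibre of dimension $-(l',E)-1=\dim T(l')-1=\dim\bP(T(l'))$ by \ref{prop:tarr}(d), giving a sanity check. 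The key vanishing $h^1(\calO_{\tX}(-l'-E))=0$ for a cusp (from (\ref{eq:vanishing}), since $Z_K=E$) again ensures $H^0(\calO_{\tX}(-l'))\to H^0(\calO_E(-l'))$ is onto and that the analytic and topological arrangements coincide, so that the second displayed identity in the theorem statement (the substitute formula for $\cZ(\bL,\bt)$) matches $\bP(T(l')\setminus\cup_v T_v(l'))$.

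\textbf{Main obstacle.} The delicate point is the claim that $[(c^{-l'})^{-1}(\cL)]$ is \emph{independent of} $\cL\in{\rm Pic}^{-l'}(E)$ and that $c^{-l'}$ behaves like a trivial (or piecewise-trivial) fibration over all of $\C^*$, not merely that each fibre is a projectivised arrangement complement. A priori the arrangement $\{H^0(E-E_v,\cL(-E_v))\}_v$ inside $H^0(E,\cL)$ could vary with $\cL$, changing the class of the complement. I expect this is controlled precisely by the cusp vanishing: for every $\cL\in{\rm Pic}^{-l'}(E)$ with $l'\in\calS'\setminus\{0\}$ one has $h^1(E,\cL)=0$ and $h^1(E_{\overline J},\cL(-E_J))=0$ (the components of $\overline{J}$ being strings of rational curves with nonnegative Chern degrees, exactly as exploited in \ref{ss:PCusps}), so $\dim H^0(E,\cL)$ and $\dim \cap_{v\in I}H^0(E-E_{v},\cL(-E_v))$ depend only on $l'$ through $\chi$, not on $\cL$. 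This makes the incidence structure of the arrangement, hence the Grothendieck class of the complement, constant along ${\rm Pic}^{-l'}(E)$; combined with smoothness and irreducibility of ${\rm ECa}^{-l'}(E)$ from \cite{NNI}, a standard stratification argument then yields the product relation in the Grothendieck ring. Establishing this constancy carefully — ideally by producing an actual local trivialisation of $c^{-l'}$, or by a generic-flatness / spreading-out argument over $\C^*$ — is where the real work lies; the rest is bookkeeping against the already-proved Theorem \ref{th:GrothNEW}.
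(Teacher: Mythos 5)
Your setup agrees with the paper's up to the point where the real content begins: the treatment of $l'=0$, the identification of the fibres $(c^{-l'})^{-1}(\cL)=\bP(H^0(E,\cL)_{{\rm reg}})$ via \cite{Kl}, and the identification ${\rm Pic}^{-l'}(E)\cong\C^*$ are all correct and are what the paper does. The gap is exactly at the step you defer, and the mechanism you propose for closing it cannot work as stated. The cusp vanishing does give that $\dim\,\cap_{v\in I}H^0(E-E_v,\cL(-E_v))$ is independent of $\cL\in{\rm Pic}^{-l'}(E)$, hence that all fibres of $c^{-l'}$ have the \emph{same} Grothendieck class; but equality of all fibre classes does not imply the product relation $[X]=[Y]\cdot[F]$ in the Grothendieck ring. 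A minimal counterexample: the squaring map $\C^*\to\C^*$ has every fibre of class $2$, yet $[\C^*]=\bL-1\neq 2(\bL-1)$. So constancy of the arrangement data along ${\rm Pic}^{-l'}(E)$ is strictly weaker than what is needed; one must actually produce Zariski-local (or piecewise) triviality of the Abel map, and no generic-flatness or spreading-out statement yields that for free. In other words, the sentence you end with (``where the real work lies'') is accurate: the argument you sketch before it does not reduce that work.

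For comparison, the paper closes this gap by an explicit, stratum-by-stratum analysis rather than by an abstract constancy argument. It stratifies ${\rm ECa}^{-l'}(E)$ by the supports of the divisors, exactly as in the proof of Theorem \ref{th:GrothNEW}, and proves local triviality of the Abel map restricted to each stratum over ${\rm Pic}^{-l'}(E)=\C^*$. The inputs are: (i) by Laufer integration of the Gorenstein form (using $p_g=1$), the map $c^{-E^*_v}\colon{\rm ECa}^{-E^*_v}(E)=E^\circ_v\to\C^*$ is a tautological bijection; (ii) the Abel maps intertwine the sum-of-divisors structure with the tensor-product (i.e.\ multiplication in $\C^*$) structure, so on the stratum $S^{k}E^\circ_v=S^k\C^*$ the Abel map becomes the product map $S^k\C^*\to\C^*$, an algebraic fibration; (iii) each edge stratum, consisting of the divisors $D_{k_v,k_u}$ and isomorphic to $\C^*$, maps isomorphically onto $\C^*$, by a blow-up reduction to case (ii). Summing the relation $[{\rm stratum}]=(\bL-1)\cdot[{\rm fibre\ of\ the\ stratum}]$ over the algebraic stratification gives the theorem. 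If you want to salvage your route, you would have to supply a trivialization argument of this concrete kind; the cohomological constancy you invoke, while true, is not sufficient on its own.
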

\begin{proof}
First we claim that ${\rm Pic}^0(E)$, regarded as the kernel
of $c_1:H^1(\calO^*_E)\to L'$ in $H^1(\calO^*_E)$,  is the multiplicative group $\C^*$.
Indeed, the exponential map $exp(2\pi i \,\cdot):\calO_E\mapsto \calO_E^*$ induces
an isomorphism $H^1(\calO_E)/H^1(E,\Z)= \C/\Z\stackrel{exp(2\pi i\,\cdot )}{\longrightarrow}\C^*$.

Note that if $f:X\to Y$ is an algebraic locally trivial fibration with fiber $F$,
then $[X]=[Y][F]$. In the proof, instead of the algebraic locally triviality of the Abel map we will prove the locally triviality of its restrictions on the strata of ${\rm ECa}^{-l'}(E)$
consider in the proof of Theorem \ref{th:GrothNEW}.

First, consider the Abel map
$c^{-E^*_v}:{\rm ECa}^{-E^*_v}(E)\to {\rm Pic}^{-E^*_v}(E)$ for a fixed $v\in \calv$
(for more details see also \cite{NNI}).
The source consists of divisors of degree one supported on $E^\circ _v$, hence it can be identified with $E^\circ _v$. The Abel map can be described by Laufer integration. Recall that $p_g=1$ and $(X,o)$ is Gorenstein.
  Then we fix
the Gorenstein differential form in $\tX\setminus E$ with pole $E$, and we apply
the Laufer integration procedure \cite{Laufer72} \cite[p. 1281]{Laufer77}, or
 \cite[7.1]{NNI}.
We obtain that $c^{-E^*_v}:E^\circ _v\to \C^*$ is a  tautological bijection.

Next, consider for any $k\geq 1$ the Abel map
$c^{-kE^*_v}:{\rm ECa}^{-kE^*_v}(E)\to {\rm Pic}^{-kE^*_v}(E)$.
One has a  multiplicative structure $\prod _{i=1}^k{\rm ECa}^{-E^*_v}(E)\to
{\rm ECa}^{-kE^*_v}(E)$ given by union (sum) of divisors, while
$\prod _{i=1}^k{\rm Pic}^{-E^*_v}(E)\to
{\rm Pic}^{-kE^*_v}(E)$ given by the tensor product of  line bundles, that is,
multiplication in $\C^*$. These operators commute with the Abel maps.
Hence $c^{-kE^*_v}:S^kE^\circ _v=S^k\C^*\to \C^*$ is given by $c^{-kE^*_v}(\sum_{i=1}^kp_i)=\prod_i
c^{-E^*_v}(p_i)$ (product in $\C^*$). This is surjective and it is an
 algebraic fibration over $\C^*$.

Finally, consider a stratum corresponding to $e=(v,u)\in\cE$ consisting of divisors of
type  $D_{k_v,k_u}$. As a space, it  is isomorphic to $\C^*$. We claim that the Abel map
restricted to it is an isomorphism. Indeed, if we blow up $p=E_u\cap E_v$ several times
conveniently at
its infinitesimal close points, this strata can be identified with the divisors
on some newly created exceptional divisor $ E^\circ _{new}\simeq \C^*$ given by
$x^{d_v}+a=0$, where $a\in\C^*$ is the parameter of the stratum, and
$d_v={\rm gcd}\{ k_v,k_u\}$. By the discussion from the previous case,
applied for $S^{d_v}E^\circ _{new}\to \C^*$, we get that the subset given by
 $\{x^{d_v}+a=0\}_{a\in\C^*}$ maps isomorphically to $\C^*$.

Since the stratification is algebraic, and the Abel map is also algebraic, we are done.
\end{proof}
\begin{corollary}\label{cor:ZCuspMot}
For  a cusp singularity $\cZ(\bL,\bt)=\cP(\bL,\bt)$ is given by
$$\cZ(\bL,\bt)= 1+\frac{\cZ^{{\rm ECa}}(\bL,\bt)-1}{\bL-1},$$
where $\cZ^{{\rm ECa}}(\bL,\bt)$ is given in Theorem \ref{th:GrothNEW}.
\end{corollary}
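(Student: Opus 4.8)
The plan is to read the corollary off directly from Theorem~\ref{lem:cuspdiv} together with the definitions of $\cZ(\bL,\bt)$ and $\cZ^{\rm ECa}(\bL,\bt)$; essentially all of the geometric content has already been isolated in that theorem, so what remains is a term-by-term bookkeeping of the two series. First I would invoke the identity $\cP(\bL,\bt)=\cZ(\bL,\bt)$ recorded in the paragraph preceding the statement (it follows from $h^1(\calO_{\tX}(-l'-E))=0$, which forces the analytic and topological arrangements to coincide). This reduces the claim to a computation of $\cZ(\bL,\bt)=\sum_{l'\in\calS'}[\bP(H^0(E,\calO_E(-l'))_{\rm reg})]\,\bt^{l'}$.

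Next I would split this sum into the $l'=0$ term and the sum over $\calS'\setminus\{0\}$. For the $l'=0$ term the first assertion of Theorem~\ref{lem:cuspdiv}, namely $\cZ(\bL,0)=1$, contributes the constant $1$. For each $l'\in\calS'\setminus\{0\}$ I would substitute the second assertion of the same theorem, $[\bP(H^0(E,\calO_E(-l'))_{\rm reg})]=[{\rm ECa}^{-l'}(E)]/(\bL-1)$, and pull the common denominator out of the sum, obtaining
\[
\cZ(\bL,\bt)=1+\frac{1}{\bL-1}\sum_{l'\in\calS'\setminus\{0\}}[{\rm ECa}^{-l'}(E)]\,\bt^{l'}.
\]

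Finally I would identify the remaining sum with $\cZ^{\rm ECa}(\bL,\bt)-1$. By definition $\cZ^{\rm ECa}(\bL,\bt)=\sum_{l'\in\calS'}[{\rm ECa}^{-l'}(E)]\,\bt^{l'}$, and its $l'=0$ term is $[{\rm ECa}^0(E)]=1$ under the convention of~\ref{bek:Eca} that ${\rm ECa}^0(E)$ is the single point $\{\emptyset\}$. Subtracting this term and substituting yields exactly $\cZ(\bL,\bt)=1+(\cZ^{\rm ECa}(\bL,\bt)-1)/(\bL-1)$, which together with $\cP(\bL,\bt)=\cZ(\bL,\bt)$ is the assertion.

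I do not expect a genuine obstacle here, since the substantive geometry, namely the fibrewise triviality of the Abel map over each stratum that produces the $(\bL-1)$ factor, is precisely the content of Theorem~\ref{lem:cuspdiv}, which may be assumed. The only point demanding care is the normalization of the two constant terms: one must check that both $[\bP(H^0(E,\calO_E)_{\rm reg})]$ and $[{\rm ECa}^0(E)]$ are set equal to $1$, so that the $l'=0$ contributions match and no spurious $1/(\bL-1)$ factor survives at $l'=0$. This is guaranteed by the first sentence of Theorem~\ref{lem:cuspdiv} and the empty-divisor convention, so the assembly goes through verbatim.
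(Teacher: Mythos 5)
Your proposal is correct and follows exactly the route the paper intends: the paper states the corollary without a separate proof precisely because it is the immediate bookkeeping you carry out, namely combining $\cP(\bL,\bt)=\cZ(\bL,\bt)$ (from the vanishing $h^1(\calO_{\tX}(-l'-E))=0$) with Theorem \ref{lem:cuspdiv} applied term by term and the convention $[{\rm ECa}^0(E)]=1$. Your attention to the matching of the two constant terms at $l'=0$ is the only point of care, and you have handled it correctly.
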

\begin{remark}  We invite the reader to verify that
$$\Big(1+\frac{\cZ^{{\rm ECa}}(\bL,\bt)-1}{\bL-1}\Big)\Big|_{\bL=1}$$
provides exactly the expressions from \ref{bek:formulacusp} for
$\cZ(\bt)$ determined by a
direct verification.
(For this, one has to determine the first terms of the
Taylor expansion of $\bL\mapsto \cZ^{{\rm  ECA}}$ at $\bL=1$.)
In this way we provide a second independent alternative proof for the expression from
Theorem \ref{bek:formulacusp}.
\end{remark}
\begin{example}
For a cusp with one vertex one has
$\cZ(\bL,\bt)=1+\sum_{k\geq 1}\, [\bP^{k-1}]\cdot t^{kE^*}$.
\end{example}
\begin{remark} The expression from the right hand side of Theorem \ref{bek:formulacusp}
is rather surprising:  the series
$$z({\bf x})=1+\sum_{v\in\calv} \frac{x_v}{1-x_v}+\sum _{(u,v)\in\cale}
\frac{x_v}{1-x_v}\cdot \frac{x_u}{1-x_u}\in \Z[[{\bf x}]]$$
appears in a natural way in a rather different context in the literature. Indeed, identify  the
minimal good resolution graph $\Gamma$ with a simplicial complex with vertex set $\calv$ and having only 1--simplices, namely the edges. Then the Hilbert series of the graded Stanley--Reisner ring
associated with $\Gamma$ is exactly $z({\bf x})$ \cite{BH,MS}.
This might provide some new starting point  in the direction of the explicit determination of the
equation of the local ring of $(X,o)$ as well  (compare e.g. with \cite{StevensCusps}).
\end{remark}

\end{document}